\def\zx{{\zeta_x}}
\def\zy{{\zeta_y}}
\def\m{{\mathfrak m}} \def\P{{\mathbb P}}
\def\C{{\mathbb C}} 
\def\Z{{\mathbb Z}} \def\cY{{\mathcal Y}}
\def\cX{{\mathcal X}} \def\cU{{\mathcal U}}
\def\cJ{{\mathcal J}} \def\cT{{\mathcal T}} 
\def\wt#1{\widetilde{#1}}
\def\wl#1{\overline{#1}} \def\wh#1{\widehat{#1}}
\def\la{\lambda} \def\al{\alpha}
\def\ep{\varepsilon} \def\vp{\varphi}
\def\sier#1{{\mathcal O}_{#1}}
\def\wlsier#1{\wl{\mathcal O}_{#1}}
\def\cO{{\mathcal O}}
\def\mapleft#1{\mathrel{%
\smash{\mathop{\longleftarrow}\limits^{#1}}}}
\def\mapright#1{\mathrel{%
\smash{\mathop{\longrightarrow}\limits^{#1}}}}
\def\mapdown#1{\Big\downarrow
 \rlap{$\vcenter{\hbox{$\scriptstyle #1$}}$}}
\def\cdmatrix#1{\def\normalbaselines{\baselineskip20pt
 \lineskip3pt \lineskiplimit3pt }\begin{matrix}#1\end{matrix}}
\def\lra{\longrightarrow}
\DeclareMathOperator{\Ker}{Ker}
\DeclareMathOperator{\Coker}{Coker}
\DeclareMathOperator{\codim}{codim}
\DeclareMathOperator{\rk}{rank}
\def\half#1#2{{\textstyle\frac{#1}{#2}}}
\newtheorem{lemma}{Lemma}
\newtheorem{proposition}[lemma]{Proposition}
\newtheorem{theorem}[lemma]{Theorem}
\newtheorem*{HSP}{The hyperplane section principle}
\newtheorem{corollary}[lemma]{Corollary}
\theoremstyle{definition}
\newtheorem{example}{Example}
\newtheorem{definition}{Definition}
\newtheorem{remark}{Remark}
\begin{document}
\title[Deforming nonnormal  singularities]%
{Deforming nonnormal isolated surface singularities
and constructing 3-folds with $\P^1$ as exceptional set}
\author{Jan Stevens}
\email{stevens@chalmers.se}
\address{Matematik,
G\"oteborgs universitet and Chalmers tekniska h\"ogskola,
SE 412 96 G\"oteborg, Sweden}

\begin{abstract}
Normally one assumes isolated surface singularities to be normal.
The purpose of this paper is to show that it can be useful to look
at nonnormal singularities. By deforming them interesting normal
singularities can be constructed, such as isolated, non Cohen-Macaulay
threefold singularities. They arise by a small contraction of a smooth
rational curve, whose normal bundle has a sufficiently positive subbundle.
We study such singularities from their nonnormal general hyperplane
section.

\end{abstract}
\dedicatory{To Gert-Martin Greuel on the occasion of his 
70th birthday.}

\maketitle

\section*{Introduction}
Suppose we are interested in a germ $(X,0)\subset (\C^N,0)$ 
of a complex space, which has some salient features. Then we 
would like to describe the singularity $X$ as explicit as possible.
This can be done by giving generators of the local ring $\sier X$, 
or by giving equations for $X\subset \C^N$. 
But in general it is too difficult
to do this directly. Instead we first replace the singularity
by a simpler one. To recover
the original one is then a deformation problem.
In a number of situations the simplification process leads to
nonnormal singlarities.

We formulate the most important simplification process 
a general principle.

\begin{HSP}
The {\rm({\em general\/})} hyperplane section of a singularity has a local
ring with the same structure as the original singularity, but
one embedding dimension lower, and which is much easier to describe. 
\end{HSP}

A nonnormal surface singularity occurs as  
general hyperplane of a normal three-dimensional isolated
singularity,  if this singularity is not Cohen-Macaulay.
Such singularities can occur as result of small contractions.
In higher dimensions 
a resolution (with normal crossings
exceptional divisor) is in general not the correct tool for
understanding the singularity. But it may
happen that a small resolution exists, 
meaning (in dimension three) that the 
exceptional set is only a curve. The simplest case is that
the curve is a smooth rational curve. Nevertheless, such a 
singularity can be quite complicated, as it need not be 
Cohen-Macaulay. This happens if the normal bundle of the
curve is $\cO(a)\oplus\cO(b)$ with $a>1$. One has always
that $2a+b<0$ \cite{And2, Nak}, and Ando has given examples
of the extremal case $(a,b)=(n,-2n-1)$ by exhibiting 
transition functions. We study the contraction of such a curve
using the hyperplane section principle. 

The first example of a manifold containing an 
exceptional $\P^1$ with normal bundle with positive subbundle,
namely $\cO(1)\oplus\cO(-3)$, was given by Laufer \cite{Lau2}.
Pinkham gave a construction 
as total  space of a 1-parameter smoothing of a partial
resolution of a rational double point \cite{Pin2}.
We consider smoothings of partial resolutions of
non-rational singularities. In this case the total space does blow
down, but not to a smoothing of the original singularity. Instead, 
the special fibre is a nonnormal surface singularity.
We retrieve Ando's examples using the canonical model  of the hypersurface
singularity $z^2=f_{2n+1}(x,y)$. We also give new examples. 

Using \textsc{Singular} \cite{GLS} it is possible to give explicit 
equations for some cases. We do not compute deformations of the
canonical model, but deformations of the nonnormal surface
singularity $X$. We study in detail the simplest case, where
$X$ differs not too much from its
normalisation $\wt X$, meaning that  $\delta(X)= \dim \sier{\wt X}/\sier X=1$.
We take the equation of $\wt X$ of the form $z^2=f(x,y)$. It turns 
out that it is in fact possible to give general formulas.

A 1-parameter deformation of a  resolution of a normal surface
singularity blows down to a deformation of the singularity 
if and only if the geometric genus is constant. Otherwise the 
special fibre is nonnormal. 
Given a 2-dimensional hypersurface singularity, the general singularity
with the same resolution graph is not Gorenstein, and not quasi-homogeneous
in the case that hypersurface is quasi-homogenous.
Again, deforming a nonnormal quasi-homogeneous surface
singularity gives a method to find equations for surface 
singularities with a given star-shaped graph. We give an example
using the same general formulas as for small
contractions (Example \ref{ex_endrie}).

The structure of this paper is as follows.  In the first section
we discuss invariants for 
nonnormal surface singularities. 
In the next section we compute deformations
for a nonnormal model of a surface singularity of multiplicity two.
In section 3 we recall in detail the relation between
deformations of a (partial) resolution and of the singularity itself.
The final section treats $\P^1$
as exceptional curve, with explicit formulas based on the 
previous calculations.

\section{Invariants of nonnormal singularities}
\subsection{Normalisation}
\begin{definition}
A reduced ring $R$ is \textit{normal} if it is
integrally closed in its total ring of
fractions. For an arbitrary reduced ring $R$ its
\textit{normalisation} $\wl R$ is the
integral closure of $R$  in its total ring of
fractions.
A singularity $(X,0)$ (i.e., the germ of a complex space) is \textit{normal} if its local ring
$\sier{(X,0)}$ is normal. The \textit{normalisation} of a
reduced germ  $(X,0)$ is a multi-germ $(\wl X,\wl 0)$
with semi-local ring $\sier{(\wl X,\wl 0)}= \wlsier{(X,0)}$.
The normalisation map is $\nu\colon (\wl X,\wl 0) \to (X,0)$,
or in terms of rings
$\nu^*\colon \sier{(\wl X,\wl 0)}
\to \sier{(X,0)}$.
\end{definition}
We have the following  function-theoretic characterisation of
normality, see e.g. \cite[p.~143]{AS}. 
Let $\Sigma$ be the singular locus of a reduced complex space $X$
and set  $U=X\setminus \Sigma$, with $j\colon U\to X$
the inclusion map. Then $X$ is normal at $p\in X$ 
if and only if for arbitrary small neighbourhoods $V\ni p$  every 
bounded holomorphic function on $U\cap V$ has a holomorpic extension to 
$X\cap V$.
If $\codim \Sigma  \geq 2$, then $\wlsier X= j_*\sier U$.

\subsection{Cohen-Macaulay singularities}
For a two-dimensional isolated singularity normal is equivalent to 
Cohen-Macaulay, but in higher dimensions this is no longer
true. 
A local ring is \textit{Cohen-Macaulay} if there is a regular sequence
of length equal to the dimension of the ring.
A $d$-dimensional germ $(X,0)$ is  
Cohen-Macaulay, if its local ring is Cohen-Macaulay. An equivalent
condition is that there exists a finite
projection $\pi\colon (X,0)\to (\C^d,0)$ with fibres of constant multiplicity
(i.e., the map $\pi$ is flat), 
see e.g., \cite[Kap. III § 1]{AS}.
From both descriptions it follows directly that
a singularity is Cohen-Macaulay if and only if its general
hyperplane section is so. In particular,
a  general hyperplane section of a normal but
not Cohen-Macaulay isolated 3-fold singularity is not normal.

A cohomological characterisation, in terms of local cohomology,
of  
isolated Cohen-Macaulay singularities is that  $H^{q}_{\{0\}}(X,\sier
X)=0$ for $q<d$. Normality implies only the vanishing for $q< 2$.
The local cohomology can be computed from a resolution $\wt X \to X$, as
$H^q(\wt X,\sier {\wt X})\cong
H^{q+1}_{\{0\}}(X,\sier X)$ for $1\leq q \leq n-2$
\cite[Prop.~4.2]{Kar}.
If $\wt X$ is a good resolution of an isolated singularity with
exceptional divisor $E$, then 
the map $H^i(\sier {\wt X})\to 
H^i(\sier E)$ is surjective for all $i$ \cite[Lemma 2.14]{Stee}. 
This implies
that a 3-fold singularity is not Cohen-Macaulay if the 
exceptional divisor of a good resolution is an irregular surface 
$F$  (meaning that $q=h^1(\sier F)>0$).
The easiest example of such a singularity is the cone over an irregular
surface. 

\begin{example}
The equations of the known families of
smooth irregular surfaces in $\P^4$ are discussed in \cite[Sect.~4]{ADHPR}.
They admit a large symmetry group, the Heisenberg group. The lowest degree
case is that of elliptic quintic scrolls. Their homogeneous coordinate
ring has a minimal free resolution of type
\[
0 \longleftarrow \sier S \longleftarrow
\cO(-3)^5 \mapleft L\cO(-4)^5  \mapleft{x}
\cO(-5) \longleftarrow 0\;,
\]
where $L$ is a matrix
\[
\begin{pmatrix}
0         &  -s_1 x_4  &   -s_2 x_3 &  s_2x_2 & s_1x_1  \\
s_1 x_2  &        0   &   -s_1 x_0 & -s_2x_4 & s_2x_3  \\
s_2 x_4  &   s_1 x_3  &     0      & -s_1x_1 & -s_2x_0  \\
-s_2 x_1  &  s_2 x_0  &  s_1x_4    &0       & -s_1x_2  \\
-s_1 x_3  &  -s_2 x_2  & s_2x_1     & s_1x_0 &0    
\end{pmatrix}
\]
and $x$ is the vector $(x_0,x_1,x_2,x_3,x_4)^t$. The constants $(s_1:s_2)$
are homogeneous coordinates 
on the modular curve $X(5)\cong \P^1$. The $i$-th column
of the matrix $\bigwedge^4 L$ is divisible by $x_i$ and the equations of 
the scroll are the five resulting cubics, which can be obtained from
the following one by cyclic permutation of the indices:
\[
s_1^4 x_0x_2x_3-s_1^3s_2(x_1x_2^2+x_3^2x_4)-s_1^2s_2^2x_0^3
+s_1s_2^3(x_1^2x_3+x_2x_4^2)+s_2^4x_0x_1x_4\:.
\]
A general hyperplane section is a quintic elliptic curve in $\P^3$,
which is not projectively normal. In fact, the linear system of 
hyperplane sections is not complete, and therefore the curve is 
not a (linear) normal curve.
\end{example}

\subsection{The $\delta$-invariant}
Let $(X,0)\subset (\C^N,0)$ be an isolated singularity.
One  measures how far the singularity is
from being normal with the \textit{$\delta$-invariant}: 
\[
\delta(X,0)= \dim(\sier{ \wl X,\nu^{-1}(0)}/\sier
{X,0})\;.
\]
For plane curves this is the familiar $\delta$-invariant,
which is also called the number of virtual double points.
In higher dimensions it is not the correct double point number.
One has to consider the double point locus
of the composed map $\vp
 \colon \wl X\to X   \to\C^N$
(see \cite{Kl} for the general theory of double point schemes).  
The expected dimension of the double point locus is
$2 \dim X -N$.
As $X$ has  an isolated singularity, necessarily  
$N\geq 2\dim  X$, and the best results are in case of equality.

Consider a map $\vp \colon \wl X \to Y$ with $\wl X$
a complete intersection and $Y$ smooth of dimension twice the
dimension of
$\wl X$. 
As measure of degeneracy
of the map $\vp$ Artin and Nagata \cite{AN} introduced (following 
Mumford) half the size of source double point locus :
\[
\Delta(\vp) = \half12
\dim \Ker \left(\sier{\wl X\times_Y\wl X} \lra \sier X\right)\;.
\]
This dimension is stable under deformations of $\vp$, and by deforming
to an immersion with only nodes one sees that $\Delta(\vp)$ is 
an integer, as in that case $\wl X\times_Y\wl X$ splits into
the diagonal and a finite set with free $\Z/2$-action.

For plane curve singularities it follows by
deforming to a curve with only nodes  that 
$\delta(X)=
\Delta(\vp)$, cf.\ \cite[3.4]{Tei}.
As the image of $\vp$ is given by one equation, the images of a 
family of maps form a flat family and 
$\sier{\wl X_S}/\vp^*\sier {Y_S}$ is flat over the base $S$.
In higher dimensions the images of
a family of maps need not form a flat family and 
$\delta(X)$ may be larger than $\Delta(\vp)$.

\begin{example}[{cf.~\cite[(5.8)]{AN}}]
Map  three copies of $(\C^2,0)$ generically  to $(\C^4,0)$, say by
$(x,y,z,w)=(s_1,t_1,0,0)=(0,0,s_2,t_2)=(s_3,t_3,s_3,t_3)$.
The image of this map $\vp$ 
lies on the quadric $xw=yz$, and there are four more cubic
equations. The singularity is rigid, one computes that $T^1=0$.
The  $\delta$-invariant is  equal to $4$. 
On the other hand, the double point number $\Delta(\vp)$
is $3$, and the general deformation of the map
is obtained by moving the third
plane. The ideal of the image is then the intersection of
the ideals
$(z,w)$, $(x,y)$ and $(x-z-a,y-w-b)$. 
There are eight cubic equations,
obtained by multiplying the generators of the three ideals in all possible 
ways. Specialising to $a=b=0$ one obtains the product of the ideals 
$(z,w)$, $(x,y)$ and $(x-z,y-w)$. This ideal has an embedded 
component. The same ideal is obtained if one does not consider
the image with its reduced structure, but with its
Fitting ideal structure, as in \cite[\S 1]{Tei};
indeed, that construction commutes with base change.
\end{example}

\subsection{Simultaneous normalisation}

\begin{definition}
Let $f\colon \cX \to S$ be a flat map between complex spaces, such that
all  fibres are reduced. A \textit{simultaneous normalisation} of
$f$ is a finite map $\nu\colon \wl{\cX}\to \cX$ such that all fibres of the
composed map $f\circ \nu $ are normal, and that for each $s\in S$ the
induced map on the fibre $\nu_s\colon \wl{\cX}_s=(f\circ \nu)^{-1}(s)
\to \cX_s=f^{-1}(s)$ is the normalisation.
\end{definition}

Criteria for the existence of a simultaneous normalisation are given by 
Chiang-Hsieh and  Lipman \cite{CL}, see also \cite[II.2.6]{GLS}.
If the nonnormal locus of $\cX$ is finite over the base $S$, and
$S$ is smooth 1-dimensional, then the normalisation of $\cX$ is 
a simultaneous normalisation if and only if $\delta (\cX_s)$ (defined
as the sum over the $\delta$-invariants of the singular points)
is constant. For families of curves this results holds over 
an arbitrary normal base $S$, a result originally due to Teissier
and Raynaud.

\subsection{The geometric genus}
The geometric genus of a normal surface singularities was introduced
by Wagreich \cite{Wag}, 
using a resolution $\pi\colon (\wt X,E)\to (X,0)$,  
as  $p_g=\dim H^1(\wt X,\sier {\wt X})$. In
terms of cycles on the resolution one has $H^1(\wt X,\sier {\wt
X})={\displaystyle \lim_{\longleftarrow}}\;
 H^1(Z,\sier Z)$, where $Z$ runs over all effective divisors with
support on the exceptional set. This means that $p_g$ is the maximal value 
of $h^1(\sier Z)$, see \cite[4.8]{rpc}. 
Wag\-reich also defined the arithmetic genus
of the singularity as the maximal value of $p_a(Z)$, where
$p_a(Z)=1-h^0(\sier Z)+ h^1(\sier Z)$. This is a topological
invariant. 
One  computes $p_a(Z)$ by the adjunction formula: $p_a(Z)=1+\frac12 Z(Z+K)$.
The geometric genus has an interpretation
independent of a resolution, as $\dim
(H^0(U,\Omega_U^2)/L^2(U,\Omega_U^2))$, where $L^2(U,\Omega_U^2)$ is
the subspace of square-integrable
$2$-forms on $U=\wt X\setminus E=X\setminus 0$   \cite{Lau}. 

For a not necessarily normal isolated singularity $(X,0)$ 
the geometric genus is a combination of the $\delta$-invariant
and invariants from the resolution. This makes sense, as
the resolution factors over the normalisation.
In any dimension we define, following Steenbrink \cite[(2.12)]{Stee}:

\begin{definition}
Let $(X,0)$ be an isolated singularity of  pure dimension $n$
with resolution $(\wt X,E)$.
The \textit{geometric genus} is
\[
p_g(X,0)= -\delta(X,0)+\sum_{q=1}^{n-1}(-1)^{q-1}\dim H^q(\wt X,\sier {\wt
X})\;.
\]
\end{definition} 
The dimension of $H^q(\wt X,\sier {\wt X})$ does not depend
on the chosen resolution, and is therefore an invariant
of the singularity; one way to see this is using an
intrinsic characterisation: one has  $H^q(\wt X,\sier {\wt X})\cong
H^{q+1}_{\{0\}}(X,\sier X)$ for $1\leq q \leq n-2$, and 
 $H^{n-1}(\wt X,\sier {\wt X})\cong 
H^0(U,\Omega_U^n)/L^2(U,\Omega_U^n)$
\cite[Prop.~4.2]{Kar}. We remark that $\delta(X,0)=\dim
H^1_{\{0\}}(X,\sier X)$.  For isolated Cohen-Macaulay singularities all
terms except the last one vanish, so
$p_g=(-1)^n\dim H^{n-1}(\wt X,\sier {\wt X})$, which is the 
direct generalisation of Wagreich's formula.

By the results of \cite{Elk} the geometric genus is semicontinuous
under deformation.
More precisely, let $\pi\colon \wt X \to X$ be a resolution
of a pure dimensional space
and let the complex $M_X^\bullet$ be the third vertex of the triangle
constructed on the natural map 
$i\colon \sier X\to R^\bullet \pi_*\sier {\wt X}$:
\begin{eqnarray*} 
& \quad M_X^\bullet\\
 \makebox[0pt]{\raisebox{5pt}{$\scriptstyle +1$}$\!\!\swarrow$} &&\quad\makebox[0pt][r]{$\nwarrow$} \\
\sier X & \lra & R^\bullet \pi_*\sier {\wt X}
\end{eqnarray*}
Then $M_X^{-1}=\Ker \sier X \to \sier {X_{\text{red}}}$,
$M_X^0=\sier{\wl X }/\sier X$, $M_X^i=R^i\pi_*\sier{\wt X}$ for 
$0<i<n=\dim X$ and all other $M_X^i$ are zero.
If $X$ has isolated singularities define the partial Euler-Poincar\'e
characteristics
\[
\psi_i(X)=\sum_{j=0}^{n-i} (-1)^j \dim M_X^{n-j-i-1}, \qquad 0\leq i \leq n\;.
\]

\begin{proposition}[{\cite[Th\'eor\`eme 1]{Elk}}]
For an equidimensional flat morphism $f\colon \cX \to S$ with $\cX$
smooth outside a closed set, finite over the base,
the functions $s\mapsto \psi_i(\cX_s)$ are upper semicontinous.
\end{proposition}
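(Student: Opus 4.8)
The plan is to reduce the statement to the lower semicontinuity of a single matrix rank. First I would rewrite each $\psi_i$ as a truncated Euler characteristic of the complex $M^\bullet$. Writing $m_k=\dim M_X^k$ and substituting $k=n-j-i-1$ in the defining sum, one obtains
\[
\psi_i(X)=\sum_{k\le n-i-1}(-1)^{(n-i-1)-k}\,m_k\,,
\]
so that, with $m:=n-i-1$, the number $\psi_i$ is exactly the alternating partial sum $\sum_{k\le m}(-1)^{m-k}\dim H^k(M^\bullet)$ of the dimensions of the cohomology sheaves of $M^\bullet$. These sheaves are concentrated at the isolated singular points and are therefore finite-dimensional, and $M^{-1}$ vanishes when the fibres are reduced. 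It thus suffices to show that, in a flat family, such truncated Euler characteristics of $M^\bullet_{\cX_s}$ vary upper semicontinuously with $s$.

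The engine is the following elementary fact about complexes of free modules. Suppose that, after localising at the singular locus, the relevant object is represented by a bounded complex $K^\bullet$ of finite free $\sier S$-modules, in the base-change compatible sense that $H^k(K^\bullet\otimes\kappa(s))=M_{\cX_s}^k$ for every $s$, where $\kappa(s)$ is the residue field. Writing $d^k\colon K^k\to K^{k+1}$ for the differentials and $r_k(s)=\rk\bigl(d^k\otimes\kappa(s)\bigr)$, one has $\dim H^k(K^\bullet\otimes\kappa(s))=\rk K^k-r_k(s)-r_{k-1}(s)$. Inserting this into $\sum_{k\le m}(-1)^{m-k}\dim H^k$ makes every rank $r_j$ with $j<m$ cancel telescopically, and leaves
\[
\sum_{k\le m}(-1)^{m-k}\dim H^k(K^\bullet\otimes\kappa(s))=\Bigl(\sum_{k\le m}(-1)^{m-k}\rk K^k\Bigr)-r_m(s)\,.
\]
The first term is independent of $s$, while $r_m(s)$ is lower semicontinuous, since the rank of a matrix of holomorphic functions can only drop on a closed analytic set. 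Hence $\psi_i(\cX_s)=\mathrm{const}-r_m(s)$ is upper semicontinuous, with precisely the sign asserted. This argument is exactly what rescues the partial \emph{alternating} sums: the individual dimensions $\dim M^k_{\cX_s}$ are only upper semicontinuous, and without the telescoping one could not control their alternating combinations.

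The real work --- and the step I expect to be the main obstacle --- is the construction of such a base-change compatible perfect complex $K^\bullet$ over $S$. One cannot produce it by resolving the total space: a simultaneous resolution $\wt{\cX}\to\cX$ need not exist, and already the lowest piece $M^0=\sier{\wl X}/\sier X$ fails to be $\sier S$-flat exactly when $\delta$ jumps, as the simultaneous-normalisation criterion recalled above shows. The way out is to use the intrinsic description of the $M^k$. Because the cohomology of $M^\bullet$ is supported at the singular points, one has $\dim M^k_{\cX_s}=\dim H^{k+1}_{\{0\}}(\cX_s,\sier{\cX_s})$ for $0\le k\le n-2$ (summed over the singular points of the fibre), the top term $M^{n-1}$ being governed by the $L^2$-description of $H^{n-1}(\wt X,\sier{\wt X})$. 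By Grothendieck--Serre duality these local-cohomology dimensions are dimensions of cohomology sheaves of the dualizing complex of $\cX_s$. In the flat family the relative dualizing complex $\omega^\bullet_{\cX/S}=f^!\sier S$ is defined and compatible with base change, so that $\omega^\bullet_{\cX/S}\otimes^{L}\kappa(s)=\omega^\bullet_{\cX_s}$; localising along the singular locus $\Sigma$, which is finite over $S$, and pushing the supported complex down to $S$ yields a bounded complex of $\sier S$-modules of finite Tor-dimension, i.e.\ a perfect complex, whose fibres compute the $M^k_{\cX_s}$ by duality. This is the complex $K^\bullet$ required above, and once it is in place the rank computation of the second paragraph applies verbatim --- after the usual reindexing that duality introduces --- and finishes the proof.
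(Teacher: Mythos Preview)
The paper does not give its own proof; it quotes the result from Elkik. Your outline is in the right spirit, and the telescoping identity in your second paragraph is exactly the algebraic mechanism that makes the partial alternating sums behave well once one has a suitable perfect complex over $S$.

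The genuine gap is in your third paragraph. Local duality identifies $\dim H^{k}_{\{0\}}(\cX_s,\sier{\cX_s})$ with the length of $H^{-k}(\omega^\bullet_{\cX_s})$, and for $0\le k\le n-1$ the latter are finite-length sheaves supported at the singular points. So from the relative dualizing complex you can indeed extract a perfect complex over $S$ whose fibres compute $M^{-1}_{\cX_s},\dots,M^{n-2}_{\cX_s}$. But $M^{n-1}=R^{n-1}\pi_*\sier{\wt X}$ is \emph{not} a local cohomology group of $\sier{\cX_s}$: it genuinely depends on a resolution and is invisible in $\omega^\bullet_{\cX_s}$, whose cohomology in degree $-n$ is the dualizing sheaf $\omega_{\cX_s}$, not a finite-length module. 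Your construction therefore proves upper semicontinuity of $\psi_i$ only for $i\ge1$, not of $\psi_0$ --- and it is precisely $\psi_0$ that the paper needs for Corollaries~\ref{cormum} and~\ref{corkol}. The $L^2$ remark describes $M^{n-1}$ but does not place it inside your complex. The missing ingredient is Grothendieck duality together with Grauert--Riemenschneider vanishing, which give
\[
R\pi_*\sier{\wt X}\;\simeq\;R\mathcal{H}om(\pi_*\omega_{\wt X},\omega^\bullet_X)[-n]\,,
\]
so that all of $M^\bullet$, including the top term, is controlled by $\omega^\bullet_X$ together with the subsheaf $\pi_*\omega_{\wt X}\hookrightarrow\omega_X$. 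Producing a relative version of the Grauert--Riemenschneider sheaf on $\cX$ that base-changes to $\pi_{s*}\omega_{\wt{\cX_s}}$ on every fibre is the substantive step, and it is here that the real work in Elkik's argument lies.
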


This result has the following corollaries, which are relevant for us.

\begin{corollary}[{\cite[p.~255]{Mum}}]\label{cormum}
If a nonnormal reduced isolated surface singularity $X$ 
is smoothable, then $\delta(X)\leq p_g(\wt X)$.
\end{corollary}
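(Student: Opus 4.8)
The plan is to reduce the inequality to the positivity of the geometric genus and to read this off from the semicontinuity statement of the preceding Proposition. First I would rewrite the claim. Since the resolution $\wt X\to X$ factors through the normalisation, $\wt X$ is also a resolution of the normal germ $\wl X$, so $p_g(\wt X)=\dim H^1(\wt X,\sier{\wt X})$ is just the geometric genus of $\wl X$ in Wagreich's sense. The definition of $p_g$ for the two-dimensional reduced germ $X$ reads $p_g(X)=-\delta(X)+\dim H^1(\wt X,\sier{\wt X})$, and hence the assertion $\delta(X)\le p_g(\wt X)$ is exactly equivalent to $p_g(X)\ge 0$.

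Next I would identify $p_g(X)$ with the partial Euler characteristic $\psi_0(X)$ attached to the complex $M_X^\bullet$. For $n=2$ this complex is concentrated in degrees $-1,0,1$, with $M_X^{-1}=\Ker(\sier X\to\sier{X_{\text{red}}})=0$ because $X$ is reduced, $\dim M_X^0=\dim(\sier{\wl X}/\sier X)=\delta(X)$, and $\dim M_X^1=\dim H^1(\wt X,\sier{\wt X})$. Substituting into $\psi_0(X)=\sum_{j=0}^{2}(-1)^j\dim M_X^{1-j}=\dim M_X^1-\dim M_X^0+\dim M_X^{-1}$ gives $\psi_0(X)=\dim H^1(\wt X,\sier{\wt X})-\delta(X)=p_g(X)$. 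So it suffices to prove $\psi_0(X)\ge 0$.

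Finally I would feed a smoothing into the Proposition. Let $f\colon\cX\to S$ be a one-parameter smoothing, with $S$ a smooth curve, $\cX_0=X$ and $\cX_s$ smooth for $s\ne 0$. For such a smooth fibre the complex $M^\bullet$ vanishes, so $\psi_0(\cX_s)=0$; upper semicontinuity of $s\mapsto\psi_0(\cX_s)$ then forces $p_g(X)=\psi_0(\cX_0)\ge\psi_0(\cX_s)=0$, which is the desired inequality. The one point needing care — and the main obstacle — is checking that $f$ meets the hypothesis that $\cX$ be smooth outside a set finite over $S$. This I would handle by noting that the special fibre is cut out on $\cX$ by the single nonzerodivisor $f^*t$: wherever $\cX_0$ is smooth the quotient $\sier{\cX}/(f^*t)$ is regular, which forces $\sier{\cX}$ itself to be regular, so the non-smooth locus of $\cX$ collapses to the isolated singular point of $X$, a set finite over $S$.
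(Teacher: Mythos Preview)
Your proposal is correct and follows exactly the route the paper has in mind: the corollary is stated immediately after Elkik's semicontinuity Proposition without a separate proof, and your argument --- identifying $\psi_0(X)$ with $p_g(X)=-\delta(X)+\dim H^1(\wt X,\sier{\wt X})$ and then comparing with $\psi_0(\cX_s)=0$ on a smooth fibre --- is precisely the intended deduction spelled out in full. Your verification that the total space $\cX$ is smooth away from the isolated singular point of $X$ (so that the Proposition applies) is the one detail the paper leaves to the reader, and your regularity-lifting argument handles it; note that the same reasoning applied to the smooth nearby fibres $\cX_s$ shows $\cX$ has no singular points over $s\neq 0$ either, so the non-smooth locus really is just $\{0\}$.
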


\begin{corollary}[{\cite[(14.2)]{Kol}}]\label{corkol}
Let $f\colon X\to T$ be a morphism from a normal threefold to the germ
of a smooth curve. If $X_0=f^{-1}(0)$ has only isolated 
singularities  and the normalisation $\wl X_0$ has only rational
singularities, then $\wl X_0=X_0$. 
\end{corollary}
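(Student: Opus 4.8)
The plan is to prove that $X_0$ is normal by showing $\delta(X_0)=0$, playing the semicontinuity of the two invariants $\psi_0=p_g$ and $\psi_1=\delta$ of the preceding Proposition against each other. First I would arrange that this Proposition applies. As $X$ is an irreducible normal threefold and $f$ is nonconstant with target a smooth curve, $f$ is flat with pure two-dimensional fibres, each a Cartier divisor; the general fibre is reduced (generic smoothness together with the $S_2$ property of $X$) and $X_0$ is reduced by hypothesis. The singular locus $\Sigma$ of $X$ satisfies $\dim\Sigma\le 1$, and the hypothesis that $X_0$ has isolated singularities rules out a positive-dimensional component of $\Sigma$ lying inside $X_0$; after shrinking $T$ I may therefore assume that every such component of $\Sigma$ dominates $T$, so that $\Sigma$ is finite over $T$ and each fibre has only isolated singularities. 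Thus the semicontinuity Proposition applies to $f$.

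Next I would compute the invariants for a reduced isolated surface singularity ($n=2$): unwinding the description of $M_X^\bullet$ and the definition of $\psi_i$ gives $\psi_1=\delta$ and $\psi_0=p_g=\dim H^1(\wt X,\sier{\wt X})-\delta$. The rationality hypothesis enters precisely here: since $\wt{X_0}\to\wl X_0\to X_0$ is a resolution and $\wl X_0$ has rational singularities, $\dim H^1(\wt{X_0},\sier{\wt{X_0}})=p_g(\wl X_0)=0$, whence $p_g(X_0)=-\delta(X_0)$. Feeding this into the two semicontinuity inequalities, valid for general $t$, the $\psi_0$-estimate reads $-\delta(X_0)=p_g(X_0)\ge p_g(X_t)=\dim H^1(\wt{X_t},\sier{\wt{X_t}})-\delta(X_t)\ge-\delta(X_t)$, so that $\delta(X_t)\ge\delta(X_0)$; the $\psi_1$-estimate reads $\delta(X_0)\ge\delta(X_t)$. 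Together these force $\delta(X_t)=\delta(X_0)$ (and, as a byproduct, $\dim H^1(\wt{X_t},\sier{\wt{X_t}})=0$).

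Finally, upper semicontinuity of $\delta$ means that it jumps only on a finite subset of $T$; the equality $\delta(X_0)=\delta(X_t)$ just obtained shows that $0$ is not a jump point, so after one more shrinking of $T$ the function $s\mapsto\delta(X_s)$ is constant. Since $X$ is already normal, its normalisation is the identity $X\to X$, and by the simultaneous normalisation criterion of Chiang-Hsieh and Lipman \cite{CL} --- constancy of $\delta$ over a smooth one-dimensional base whose nonnormal locus, here empty, is finite over it --- this identity is a simultaneous normalisation. By the definition of simultaneous normalisation this means that every fibre $X_s$ is normal; in particular $X_0$ is normal, i.e. $\wl X_0=X_0$.

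The crux, and main obstacle, is the middle step: one has to recognise that rationality of $\wl X_0$ removes exactly the term $\dim H^1(\wt{X_0},\sier{\wt{X_0}})$ and converts $p_g(X_0)$ into $-\delta(X_0)$, for only after this sign change do the two semicontinuity estimates bound $\delta(X_t)$ from both sides and pin it down. No softer argument working directly on the general fibre is available, since a normal, non-Cohen-Macaulay threefold can perfectly well have nonnormal fibres --- the very phenomenon exploited elsewhere in this paper. A secondary, purely technical, obstacle is the repeated shrinking of $T$ required to meet simultaneously the hypotheses of the semicontinuity Proposition and of the normalisation criterion.
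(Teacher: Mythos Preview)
Your proof is correct and follows the same approach as the paper's one-sentence argument: use semicontinuity of $\psi_0$ (which equals $-\delta$ once rationality of $\wl X_0$ kills the $H^1$ term) and of $\psi_1=\delta$ to squeeze $\delta$ into being constant, then invoke the simultaneous normalisation criterion together with the normality of $X$ itself. You have simply fleshed out the setup (flatness, reducedness of fibres, finiteness of the singular locus over $T$) and the concluding step that the paper leaves implicit.
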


For a 1-parameter deformation of an  isolated nonnormal surface singularity
with rational normalisation semicontinuity of $\psi_0=-\delta $ and $\psi_1
=\delta$
implies that $\delta$ is constant. Therefore there is a simultaneous 
normalisation. 
The same is not necesarily true for infinitesimal deformations.
In the next section we give an example, where there exist obstructed
deformations without simultaneous normalisation.

\section{Computations} \label{sect_comp}
In this section 
we describe  equations and  
deformations for  surface singularities with $\delta=1$,
whose normalisation is a double point, so  given by an equation of the form
 $z^2=f(x,y)$, with
$f\in \m^2$. 

We recall the set-up for 
deformations of singularities (for details see \cite{knis}). 
One starts from a system of
generators $(g_1,\dots,g_k)$  of the ideal of the singularity $X$.
We also need generators of the module of relations, which  we
write as matrix $(r_{ij})$,
$i=1,\dots,k$, $j=1,\dots,l$. So we have $l$
relations $\sum g_ir_{ij}=0$. We perturb
the generators to
$G_i(x,t)$ with $G_i(x,0)=g_i(x)$. 
These describe a (flat) deformation
of $X$ if it is possible to lift the relations: there should exist a 
matrix $R(x,t)$ with $R(x,0)=r(x)$ such that $\sum G_iR_{ij}=0$ for 
all $j$. One can take this as definition of flatness.
In particular,
for an infinitesimal deformation $G_i(x,\ep)=g_i(x)+\ep
g_i'(x)$ (with $\ep^2=0$)
one needs the existence of a matrix $r'(x)$ such that
$\sum (g_i+\ep g_i')(r_{ij}+\ep r_{ij}')
= \ep \sum (g_i'r_{ij}+g_ir_{ij}')=0$, or equivalently that 
$\sum  g_i'r_{ij}$ lies in the ideal generated by the $g_i$.
Deformations, induced by coordinate transformations, are
considered to be trivial.
To find the versal deformation, one takes representatives for
all possible non-trivial infinitesimal deformations, and tries to lift to
higher order. The obstructions to do this define the base space
of the versal deformation.

We consider a subring $\cO$ of
$\wl\cO=\C\{x,y,z\}/(z^2-f(x,y))$ with $\delta=\dim \wl\cO/\cO=1$. 
We need a system of generators for the defining ideal. This, and
the possible deformations depend
on the subring in question. We write
$\cO=\C+L+\m^2$, where $L$ is a two-dimensional subspace of
$\m/\m^2$, which can be given as kernel of a linear form $l\colon
\m/\m^2 \to\C$, $l=ax+by+cz$.

First suppose that $c\neq 0$; we may assume that $c=1$. Generators
of $\cO$ are then
\begin{equation}\label{generat}
\begin{aligned}
\xi_1&=x-az &\qquad \eta_1&=y-bz  &\qquad \zeta_2&=z^2\\
\xi_2&=z(x-az) & \eta_2&=z(y-bz)  & \zeta_3&=z^3
\end{aligned}
\end{equation}
This system of generators is not minimal, as we not yet have taken
the relation $z^2=f(x,y)$ into account. Because $f\in\m^2$, it can be
written in terms of the generators; for example,
$x^2=\xi_1^2+2az\xi_1+a^2z^2=
\xi_1^2+2a\xi_2+a^2\zeta_2$, and
$x^3=\xi_1^3+3a\xi_1\xi_2+3a^2\zeta_2\xi_1+a^3\zeta_3$.
The relations $z^2=f(x,y)$ and $z^3=zf(x,y)$
lead to  equations
\begin{align*}
\zeta_2&=\varphi_2(\xi_1,\xi_2,\eta_1,\eta_2,\zeta_2,\zeta_3)\;,
\\
\zeta_3&=\varphi_3(\xi_1,\xi_2,\eta_1,\eta_2,\zeta_2,\zeta_3)\;.
\end{align*}
Therefore the variables $\zeta_2$ and $\zeta_3$ can be
eliminated and the embedding dimension of $\cO$ is four. Coordinates
are $\xi_1$, $\xi_2$, $\eta_1$ and $\eta_2$, and equations
for the corresponding singularity $X$  are
\begin{equation}\label{eq-betw-gen}
\begin{split}
    \xi_1\eta_2&=\xi_2\eta_1\\
    \xi_2^2&=\xi_1^2\varphi_2(\xi_1,\xi_2,\eta_1,\eta_2)\\
    \xi_2\eta_2&=\xi_1\eta_1\varphi_2(\xi_1,\xi_2,\eta_1,\eta_2)\\
    \eta_2^2&=\eta_1^2\varphi_2(\xi_1,\xi_2,\eta_1,\eta_2)
\end{split}
\end{equation} 
where $\varphi_2(\xi_1,\xi_2,\eta_1,\eta_2)$ is obtained from
$\varphi_2(\xi_1,\xi_2,\eta_1,\eta_2,\zeta_2,\zeta_3)$ by
eliminating $\zeta_2$ and $\zeta_3$.

\begin{proposition}
  The  nonnormal singularity with equations \eqref{eq-betw-gen}
has only deformations with simultaneous
  normalisation.
\end{proposition}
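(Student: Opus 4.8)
The plan is to prove that $\delta$ stays constant (equal to $1$) in every deformation of $X$, so that a simultaneous normalisation exists --- over a smooth curve by the criterion of Chiang-Hsieh and Lipman \cite{CL} recalled above, and in general by exhibiting it directly as a relative double cover. First I would record the explicit normalisation coming from \eqref{generat}: the ring $\wl\cO$ is generated over $\cO$ by the element $z=\xi_2/\xi_1=\eta_2/\eta_1$, which satisfies $z^2=\varphi_2$, and the conductor is the maximal ideal $\m_\cO$, so that indeed $\delta=1$. The four equations \eqref{eq-betw-gen} are exactly the $2\times2$ minors of the matrix
\[
A=\begin{pmatrix} \xi_1 & \eta_1 & \xi_2 & \eta_2\\ \xi_2 & \eta_2 & \xi_1\varphi_2 & \eta_1\varphi_2\end{pmatrix},
\]
whose bottom row is $z$ times its top row; this is the parametric, ``double cover'' presentation of $\nu\colon\wt X\to X$. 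Since $\psi_1=\delta$ is upper semicontinuous by the cited semicontinuity theorem of Elkik, every fibre $\cX_s$ of a deformation satisfies $\delta(\cX_s)\le1$, so the whole content of the statement is to exclude $\delta(\cX_s)=0$; equivalently, to exclude deformations that break the double cover structure of $A$.

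Next I would compute the first order deformations using the relation-lifting description of flatness recalled in this section: perturb the generators $E_1,\dots,E_4$ of \eqref{eq-betw-gen}, write down a generating set of relations $\sum_i E_i r_{ij}=0$ induced by the matrix $A$, and determine which perturbations admit a lift $r_{ij}'$. The goal is to prove that, modulo the trivial (coordinate) deformations, every class in $T^1(X)$ is obtained by perturbing $\varphi_2$ alone, that is, by deforming the double point $z^2=f(x,y)$ inside its own semiuniversal deformation $z^2=\Phi(x,y,t)$. For such a perturbation the proportional-row form of $A$ is preserved, and the relative double cover $\wl{\cX}=\{w^2=\Phi\}$ together with the finite map $\wl{\cX}\to\cX$ given by the deformed formulas \eqref{generat} is a candidate simultaneous normalisation: it is finite, flat over the base, and restricts on each fibre to the normalisation, with $\delta\equiv1$.

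To pass from first order to the full semiuniversal deformation I would compare obstruction theories. Lifting a deformation of $X$ to higher order should be possible exactly when the corresponding lift of $z^2=f$ exists, so the semiuniversal family of $X$ is the image, under the $\delta=1$ construction above, of the semiuniversal family $z^2=\Phi$ of the hypersurface. Its relative normalisation is then the flat family of double points $w^2=\Phi$, which is a simultaneous normalisation; in particular $\delta$ is constant and the proposition follows. (When $\wt X$ happens to be a rational double point one can shortcut this via Corollary \ref{cormum} and the semicontinuity argument for rational normalisations; the point of the computation is to cover the general $f$, where $p_g(\wt X)>0$ and $X$ could a priori be smoothable.)

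The hard part will be the $T^1$ computation, and specifically proving that no first order deformation decreases $\delta$. Because \eqref{eq-betw-gen} is a non-Cohen--Macaulay ideal of codimension two, its syzygies are not governed by the usual determinantal (Hilbert--Burch) format, so the main work is to show that the liftability of these syzygies forces every admissible perturbation back into the proportional-row shape of $A$, rather than smoothing the pinch. Equivalently, the real task is to verify that the natural map from $T^1(X)$ to the space of deformations of the double point $z^2=f$ is an isomorphism, so that the double cover structure --- and with it the non-smoothability of $X$ --- is rigid.
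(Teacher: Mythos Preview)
Your core plan --- compute $T^1$ by lifting the relations induced by the $2\times4$ matrix and show that every admissible perturbation is, up to coordinate changes, a perturbation of $\varphi_2$ alone --- is exactly what the paper does. The paper writes the four syzygies, normalises so that $\xi_1\eta_2-\xi_2\eta_1$ is undeformed, and then forces the three remaining perturbations $\ep_{\xi\xi},\ep_{\xi\eta},\ep_{\eta\eta}$ into the proportional-row shape you describe; a further coordinate change reduces everything to $\varphi_2\mapsto\varphi_2+\ep_{11}(\xi_1,\eta_1)$, and the simultaneous normalisation $z^2=f+\ep_{11}(x-az,y-bz)$ is then written down by hand.

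Two places where your scaffolding diverges from the paper and could cost you effort or lead you astray:

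\emph{Obstructions.} You plan to pass from first order to all orders by comparing obstruction theories of $X$ and of the hypersurface. The paper bypasses this entirely: once the first-order deformation is reduced to $\varphi_2\mapsto\varphi_2+\ep_{11}$, the very same equations \eqref{eq-betw-gen} with $\varphi_2$ replaced by $\varphi_2+\ep_{11}$ are already a flat family over an arbitrary base (the relations lift identically, since the equations have the same shape). So the versal base is smooth without any obstruction computation, and the semicontinuity/Elkik input is never used. Your route is not wrong, but it is heavier than necessary, and the paper explicitly notes that $T^2\neq0$, so an obstruction comparison would require real care.

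\emph{The $T^1$ isomorphism.} Framing the goal as ``$T^1(X)\to T^1(\wt X)$ is an isomorphism'' is both stronger than needed and not obviously true. The map in that direction is not even defined until you already know simultaneous normalisation exists; and in the other direction, the perturbation $\ep_{11}$ is a function of $\xi_1=x-az$ and $\eta_1=y-bz$, not of $(x,y)$, so there is no reason the induced map on $T^1$'s should be bijective. What the paper proves, and what suffices, is only that the natural map from ``deformations of the double-cover data'' to $T^1(X)$ is \emph{surjective}. Aim for that statement rather than an isomorphism.
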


\begin{proof}
We write down the four relations between the generators:
\begin{align*}
    (\xi_2\eta_2-\xi_1\eta_1\varphi_2)\xi_1-(\xi_2^2-\xi_1^2\varphi_2)\eta_1
  -(\xi_1\eta_2-\xi_2\eta_1)\xi_2&=0\\
    (\eta_2^2-\eta_1^2\varphi_2)\xi_1-(\xi_2\eta_2-\xi_1\eta_1\varphi_2)\eta_1
  -(\xi_1\eta_2-\xi_2\eta_1)\eta_2&=0\\
  (\xi_2\eta_2-\xi_1\eta_1\varphi_2)\xi_2-(\xi_2^2-\xi_1^2\varphi_2)\eta_2
  -(\xi_1\eta_2-\xi_2\eta_1)\xi_1\varphi_2&=0 \\
   (\eta_2^2-\eta_1^2\varphi_2)\xi_2-(\xi_2\eta_2-\xi_1\eta_1\varphi_2)\eta_2  
   -(\xi_1\eta_2-\xi_2\eta_1)\eta_1\varphi_2&=0
\end{align*}
All perturbations of the equations lie in the maximal ideal. By
using coordinate transformations we can assume that the first
equation $\xi_1\eta_2-\xi_2\eta_1$ is not perturbed at all. 
We perturb the other equations as
$\xi_2^2-\xi_1^2\varphi_2-\ep_{\xi\xi}$,
$\xi_2\eta_2-\xi_1\eta_1\varphi_2-\ep_{\xi\eta}$ and
$\eta_2^2-\eta_1^2\varphi_2-\ep_{\eta\eta}$.
We get four equations holding in $\cO$, which can be written as
\[
\rk \begin{pmatrix} \xi_2&\xi_1&\ep_{\xi\xi}&\ep_{\xi\eta}\\
\eta_2&\eta_1&\ep_{\xi\eta}&\ep_{\eta\eta}\end{pmatrix} \leq1\;.
\]
The first minor is the equation $\xi_1\eta_2-\xi_2\eta_1$, and the
last minor vanishes identically, 
as we are considering infinitesimal deformations.
Thus  the perturbations can be written as
$\ep_{\xi\xi}=\xi_1^2\ep_{11}+\xi_1\xi_2\ep_{12}$ (without
$\xi_2^2$-term, as $\xi_2^2=\xi_1^2\varphi_2$),
$\ep_{\xi\eta}=\xi_1\eta_ 1\ep_{11}+\xi_1\eta_2\ep_{12}$ and
$\ep_{\eta\eta}=\eta_1^2\ep_{11}+\eta_1\eta_2\ep_{12}$ with 
$\ep_{11}$ and $\ep_{12}$ the same functions of the variables
in all three perturbations. We can arrange that $\ep_{11}$ only
depends on $\xi_1$ and $\eta_1$, and not on $\xi_2$ and $\eta_2$,
by collecting terms in $\ep_{12}$.
The coordinate transformation
$\xi_2\mapsto \xi_2+\frac12\xi_1\ep_{12}$,  
$\eta_2\mapsto \eta_2+\frac12\eta_1\ep_{12}$
gets rid of the terms with $\ep_ {12}$.
The resulting equations
\begin{align*}
    \xi_1\eta_2&=\xi_2\eta_1\\
    \xi_2^2&=\xi_1^2(\varphi_2+\ep_{11})\\
    \xi_2\eta_2&=\xi_1(\varphi_2+\ep_{11})\\
    \eta_2^2&=\eta_1^2(\varphi_2+\ep_{11})
\end{align*} 
define not only an
infinitesimal deformation, but also a genuine deformation. We conclude
that the base space of the versal deformation is smooth (even though
$T^2$ is not zero).

The simultaneous normalisation is given by $z^2=f+\ep_{11}(x-az,y-bz)$.
\end{proof}

To obtain interesting other deformations we have to assume that $c=0$. 
Then the
subspace $L$ of $\m/\m^2$ is given as kernel of a linear form
$l=ax+by$. Assuming $a=1$ we find $z$ and $y-bx$ as generators of
degree 1. As we have not yet specified the form of $f(x,y)$ we can
apply a coordinate transformation to achieve that $b=0$. 
So we take the linear form $l=x$. Generators of the ring $\cO$ are
now $z$, $y$, $w=zx$, $v=yx$, $x_2=x^2$ and $x_3=x^3$. If none of
these monomials occurs in $f(x,y)$, then the embedding dimension is
6.

The formulas 
\begin{equation}\label{newgener}
\begin{aligned}
x_2&=x^2 &\qquad y&=y &\qquad z&=z\\
x_3&=x^3 & v&=xy & w&=xz 
\end{aligned}
\end{equation}
define an injective map $\nu \colon\C^3\to \C^6$. 
Let $Y$ be the image, which is an isolated 3-dimensional singularity.
As it  is not even normal, its nine
equations cannot directly be given in determinantal format, but this 
is possible by allowing some redundancy.
Consider the maximal minors of the $2\times 6$ matrix
\begin{equation}
\label{detvgl}
\begin{pmatrix}
z & y & x_2 & w & v & x_3 \\
w & v & x_3 & zx_2 & yx_2 & x_2^2
\end{pmatrix} \;.
\end{equation}
There are three equations occurring twice, like $vw-zyx_2$, while the
last three are obtained by multiplying the first three by $x_2$. Therefore
we get nine generators of the ideal. 
The determinantal format gives relations between the generators, and a
computation with \textsc{Singular} \cite{GPS}
shows that there are no other relations. A further computation gives  
$\dim T^1_Y=1$. 
The 1-parameter deformation is
given by the maximal minors of
\begin{equation}
\label{afbvgl}
\begin{pmatrix}
z & y & x_2 & w & v & x_3 \\
w & v & x_3 & z(x_2+s) & y(x_2+s) & x_2(x_2+s)
\end{pmatrix} \;.
\end{equation}
It comes from deforming the map
$\nu$ to
\begin{equation}
\label{afb}
\begin{aligned}
x_2&=x^2-s &\qquad y&=y &\qquad z&=z\\
x_3&=x(x^2-s) & v&=xy & w&=xz 
\end{aligned}
\end{equation}
The singularity of the
general fibre is isomorphic to the one-point union of two 3-spaces
in 6-space. 

Now we restrict the map $\nu$ to a hypersurface $ \{z^2=f(x,y)\}
\subset \C^3$, with $z^2-f \in \nu^*\m_6$. 
We assume that $f\in(y^2,yx^2,x^4)$.
We get two additional 
equations by writing $z^2-f$ and $x(z^2-f)$ in the coordinates on
$\C^6$. We write
\begin{equation}\label{extravgl}
\begin{split}
z^2&=y \al+x_2 \beta\;, \\ 
zw&=v \al + x_3\beta \;. 
\end{split}
\end{equation}
The second equation is obtained from the first by \textit{rolling factors} 
using the matrix \eqref{detvgl}, i.e., replacing in each monomial
one occurrence of an entry of the upper row by the entry of the lower
row in the same column.
One can roll once more, to give an expression for $w^2$, but as we have
the equation $w^2=x_2z^2$, the resulting equation is just the $z^2$-equation
multiplied by $x_2$.

The singularity has a large component with simultaneous 
normalisation. For this just perturb $z^2-f$ with elements of 
$\nu^*\m_ 6$. 
That means that we can write the two additional equations,
using rolling factors. This works also for the deformation of $\nu(\C^3)$
given by the  equations \eqref{afbvgl} and the map \eqref{afb}.
But there is also another deformation direction. 

\begin{proposition}
\label{infdef}
The singularity $X$ with normalisation of the form
$z^2=f(x,y)$, where $f\in (y^2,yx^2,x^4)$,
 and local ring with generators \eqref{newgener}
has an infinitesimal deformation, not tangent to the
component with simultaneous normalisation. 
\end{proposition}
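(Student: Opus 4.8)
My plan is to work directly with the deformation criterion recalled at the start of this section: writing the eleven generators of the ideal $I$ of $X$ --- the nine maximal minors of the matrix \eqref{detvgl} together with the two equations \eqref{extravgl} --- and their relations, a perturbation $G_i=g_i+\ep g_i'$ is a first-order deformation exactly when $\sum_i r_{ij}g_i'\in I$ for every relation $\sum_i r_{ij}g_i=0$. The relations are the determinantal syzygies of \eqref{detvgl} together with the rolling-factors relations linking \eqref{extravgl} to the minors; concretely, writing $g_{10}=z^2-y\alpha-x_2\beta$ and $g_{11}=zw-v\alpha-x_3\beta$, one has identities such as $zg_{11}-wg_{10}=-\alpha m_{12}-\beta m_{13}$ and $yg_{11}-vg_{10}=-zm_{12}-\beta m_{23}$, with $m_{ij}$ minors of \eqref{detvgl}. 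First I would record these relations explicitly.

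The next step is to pin down the subspace of $T^1_X$ coming from simultaneous normalisation and to show that the remaining deformations cannot be captured by perturbing only \eqref{extravgl}. Perturbing $z^2-f$ by an element $h\in\nu^*\m_6$ and rolling leaves the minors untouched and perturbs $g_{10},g_{11}$ by $(\gamma_0,\gamma_1)=(-h,-xh)$, where rolling equals multiplication by $x$ because the bottom row of \eqref{detvgl} is $x$ times the top row; the one-parameter deformation \eqref{afbvgl}--\eqref{afb} of the ambient threefold $Y$ supplies the only further simultaneous-normalisation direction. Now if one perturbs only the two equations \eqref{extravgl}, the relations above force, in the domain $\wl\cO$, the identity $\gamma_1=x\gamma_0$, and the requirement $\gamma_1\in\cO$ forces $\gamma_0\in\m$; but then $(\gamma_0,\gamma_1)=(\gamma_0,x\gamma_0)$ is exactly the roll of the $f$-perturbation $-\gamma_0$, hence simultaneous. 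I therefore expect the transverse class to perturb the nine minors, necessarily coupled with a compensating perturbation of \eqref{extravgl}, and to lift for $X$ even though the analogous perturbation is obstructed for $Y$ (which has $\dim T^1_Y=1$): the two extra equations enlarge the relation module and open up room that $Y$ does not have.

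Concretely I would search, with \textsc{Singular} in the spirit of the computation giving $\dim T^1_Y=1$, for a perturbation of the minors that lifts modulo $I$ and whose image in the space of minor-perturbations is not proportional to that of \eqref{afbvgl}; exhibiting one such class proves the statement, since the simultaneous-normalisation component contributes only the single minor-direction \eqref{afbvgl}. Transversality is in fact forced conceptually: by the upper semicontinuity of $\psi_0=-\delta$ and $\psi_1=\delta$, any honest one-parameter deformation of $X$ keeps $\delta$ constant and so admits a simultaneous normalisation (here one uses that $\wl X$ has rational singularities, as in Corollary \ref{corkol}); a first-order class transverse to that component therefore cannot lift and must be obstructed, which is precisely the phenomenon promised in the introduction. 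The main obstacle is this middle computation: producing a single minor-perturbation together with the matching correction of \eqref{extravgl} for which every relation lifts, and then certifying that it is neither trivial nor a combination of the perturb-$f$ and \eqref{afbvgl} directions.
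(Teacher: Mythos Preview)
Your proposal is a plan rather than a proof: you correctly identify that the transverse class must perturb the nine minors, and you describe how to recognise transversality, but the actual construction --- which you yourself call ``the main obstacle'' --- is never carried out. Saying you would search with \textsc{Singular} is not a substitute for exhibiting the class, and nothing you write guarantees that such a class exists for general $f\in(y^2,yx^2,x^4)$ rather than only in the specific examples a computer run would cover.

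The paper fills exactly this gap. It first organises the relations: besides the determinantal syzygies of \eqref{detvgl} there are six ``mixed'' relations involving $h_1,h_2$, obtained from a triple matrix product, and pulling back to $\wl\cO$ shows these generate all relations with nonzero $h$-part. To construct the perturbation one works on the normalisation, where the bottom row of \eqref{detvgl} is $x$ times the top row, so the determinantal relations reduce to those obtained by doubling the top row. The identity $z\cdot z-\al\cdot y-\beta\cdot x_2=0$ in $\wl\cO$ gives a candidate perturbation of three of the minors, but it fails to extend to the other six. The key move is to multiply by $x$: setting $\wl\al=x\al$ and $\wl\beta=x\beta$ (elements of $\cO$), the identity $zw=y\wl\al+x_2\wl\beta$ does extend consistently, and the paper writes the resulting $g'$ out as an explicit nine-entry column (with $h_1',h_2'$ and the last minor left unperturbed). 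That this class is not tangent to the simultaneous-normalisation component is then visible by inspection: the first three entries $\wl\beta,-\wl\al,-w$ perturb minors built from the first three columns of \eqref{detvgl}, which neither the perturb-$f$ directions nor the $s$-deformation \eqref{afbvgl} touch.

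One further correction: your ``conceptual'' transversality argument invokes Corollary~\ref{corkol} and assumes $\wl X$ has rational singularities, but the hypothesis $f\in(y^2,yx^2,x^4)$ does not imply this. Indeed, the paper's subsequent applications are precisely to the non-rational cases (e.g.\ $\wt E_7$), where the infinitesimal deformation of the proposition \emph{does} lift to a genuine one-parameter family. The obstruction you anticipate occurs only when $p_g(\wl X)=0$, as the paper remarks immediately after the proof; it is a consequence of the proposition in the rational case, not an ingredient of its proof.
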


\begin{proof}
The existence is suggested by a \textsc{Singular} \cite{GPS} computation
in  examples. The result can be checked by hand. 

We first give the relations between the equations. 
We write $g_i$ for the  equations of $Y$, coming from the matrix 
\eqref{detvgl}, and $h_k$ for the two additional equations \eqref{extravgl}.
A relation has the form $\sum g_ir_{ij} +h_ks_{kj}=0$. We can pull it
back to $\C^3$ with the map $\nu$. It then reduces to 
$(z^2-f)(s_{1j}+xs_{2j})=0$.  Therefore we find six relations, generating
all relations with non-zero $s_{kj}$. They are found by reading the
matrix product
\[
\begin{pmatrix}
w & -z \\ v & -y \\ x_3 & -x_2 \\ zx_2& -w\\yx_2 &-v \\x_2^2 & -x_3
\end{pmatrix}
\begin{pmatrix}
z & y &x_2 \\ w &v & x_3
\end{pmatrix}
\begin{pmatrix}
z \\ -\al \\ -\beta
\end{pmatrix}
\]
in two different ways: the product  of the last two matrices
is a column vector containing the two equations $h_1$, $h_2$,
while the product of the first two is 
a $6\times 3$ matrix, with antisymmetric upper half
containing  the minors of the middle matrix (the
first half of the matrix \eqref{detvgl}) and symmetric lower half,
containing the remaining six generators. 
The other relations, with $s_{kj}=0$, are the determinantal
relations between the
equations $g_i$ of the three-dimensional singularity $Y$.

To find a
solution to $\sum g_i'r_{ij} + h_k's_{kj}=0\in \sier X$ it suffices 
to compute on 
the normalisation. 
We start with the determinantal relations for the $g_i$.
As the second row of the matrix is just the first
one multiplied with $x\in \wlsier X$, it suffices to consider only 
those relations 
obtained by doubling the first row. 
Consider the relation
\[
0=
\begin{vmatrix}
z & y & x_2 \\
z & y & x_2 \\
w & v & x_3
\end{vmatrix}
=
(yx_3-vx_2)z-(zx_3-wx_2)y+(zv-yw)x_2\;.
\]
The perturbation of the
three equations involved,
obtained from 
$z\cdot z - \al \cdot y -\beta \cdot x_2=0\in \wlsier X$,
cannot be extended to the other equations. It is possible to extend
after multiplication with $x\in \wlsier X$. Let $\wl\al\in \sier X$
be the element with $\wl\al= x\al \in\wlsier X$ and likewise
$\wl\beta=x\beta$. Then $zw=y\wl\al+x_2\wl\beta$.
We do not perturb the equations $h_1$ and $h_2$, nor the equation
$x_3^2-x_2^3$. We solve for the perturbations of the 
remaining equations, and check that all equations
 $\sum g_i'r_{ij} =0\in \sier X$ described above are
satisfied. 
The result is the following infinitesimal deformation 
(written as column vector): 
\[
G^t=g^t+\ep g'^t=
\begin{pmatrix}
zv-yw       \\
zx_3-wx_2    \\
yx_3-vx_2    \\
w^2-x_2z^2  \\
wv-x_2zy    \\
v^2-x_2y^2   \\
wx_3-zx_2^2  \\
vx_3-yx_2^2  \\
x_3^2-x_2^3  \end{pmatrix}
+\ep
\begin{pmatrix}
 \overline{\beta} \\
-\overline{\al} \\
-w \\
2z\al \\
  2y\al+x_2\beta  \\
  2zy \\
 x_2\al \\
 zx_2\\
 0
\end{pmatrix}
\]
\end{proof}

For the extension to higher order one needs 
further divisibility properties of 
$\al$ and $\beta$. Indeed, if $p_g(\wl X)=0$, then by
Corollaries \ref{cormum} and \ref{corkol}
the deformation of the Proposition has to be obstructed.

\begin{example}\label{rat_example}
Let the normalisation be a rational double point. 
Specifically, we take $\wl X$  of type $A_3$, given by $z^2=y^2+x^4$.  
For the nonnormal singularity $X$ the dimension of $T^1$ is equal to 8.
There is a 7-dimensional component with simultaneous normalisation: 
six parameters are seen in the equation
$z^2-y^2-x_2^2+a_1z+a_2y+a_3x_2+a_4w+a_5v+a_6x_3$, and $s$ is a
parameter for the deformation \eqref{afb} of the map  $\nu\colon \C^3\to\C^6$.
Finally let $t$ be the coordinate for infinitesimal deformation
of Proposition \ref{infdef}. A computation of the versal
deformation with \textsc{Singular} \cite{GPS, Ma} shows that 
the equations for the base space are 
$st=a_1t=a_2t=a_3t=a_4t^2=a_5t^2=a_6t^2=t^3=0$.
\end{example}

For $\wl X$ given by $z^2=f(x,y)$ with $f\in \mathfrak{m}^k$ the
structure of the versal deformation stabilises for large $k$.
Computations in examples with  \textsc{Singular} \cite{GPS} 
suggest that $T^2$ always has dimension  16 (this is also true 
for the singularity of Example \ref{rat_example})
and that there are in general 11 equations for the base space.
There is one component of codimension 1 with simultaneous
normalisation and two other components: a singularity $z^2=f(x,y)$
with $f\in \mathfrak{m}^k$, $k\ge6$  can be deformed into
$\wt E_7$ or $\wt E_8$.

We compute the component related to $\wt E_7$. This can be done by 
determining the versal deformation in negative degrees of the
singularity $z^2=-ay^4+bx^5$, where $a$ and $b$ are parameters,
using \cite{Ma}.
After a coordinate transformation the equations of the base space
do not depend on the parameters. We find the component. For other 
singularities we have just to substitute suitable functions
of space and deformation variables for the parameters in the 
formulas we find.

The result is rather complicated, so
we do not give all equations, but use 
\[
G_3 = x_3y-x_2v+tw
\]
to eliminate the variable $w$.
Four of the original equations do not involve $w$.
They are
\begin{align*}
G_6&= v(v+a_1t^2)-x_2y^2-2tzy-bt^2x_3+a_2t^2y^2+a_0t^2(x_2+a_2t^2)-a_3bt^4y\;,
\\
G_8&= x_3v-x_2^2y-tzx_2-2a_4t^2y(y^2+a_0t^2)\\
   &\qquad
    +ba_4t^4(v+a_1t^2) -(a_3v+a_2x_2)t^2y-a_2t^3z-a_3bt^4(x_2+a_2t^2)\;,
\\
G_9&=x_3^2-x_2(x_2+a_2t^2)^2-a_3t^2(x_2+a_2t^2)(v+a_1t^2)\\
&\qquad
   +a_4t^2(v+a_1t^2)^2-4a_4t^2x_2y^2-a_3^2t^4y^2\;,
\\
H_1&=z^2+a_4(y^2+a_0t^2)^2-bx_3x_2 \\
  &\qquad +(a_3v+a_2x_2)(y^2+a_0t^2)+a_1x_2v+a_0x_2^2
  +a_3bt^3z-a_4b^2t^4x_2\;.
\end{align*}
The ideal with $w$ eliminated has three more generators,
which we give the name of the original generators leading to them.
\begin{align*}
G_1&=x_3(y^2+a_0t^2)-x_2yv+zt(v+a_1t^2)\\
   &\qquad
-bt^2x_2(x_2+a_2t^2)
  +a_3t^2y(y^2+a_0t^2)+a_1t^2x_2y\;,\\
G_2& =x_3x_2y-x_2v(x_2+a_2t^2)+tzx_3-a_4t^2(v+a_1t^2)(y^2+a_0t^2)\\
&\qquad
  +2a_4bt^4x_2y+a_3a_2t^4(y^2+a_0t^2)-a_3t^3zy+a_3a_0t^4x_2\;,
\\
H_2&=x_2z(v+a_1t^2)-x_3yz+a_4t(v+a_1t^2)y(y^2+a_0t^2)\\
   &\qquad
+a_3tx_2y^3+a_2tx_2vy
 +ta_1x_2^2y+ta_0x_3x_2\\
&\qquad
  -btx_2^2(x_2+a_2t^2)+a_3t^2zy^2-2a_4bt^3x_2y^2-a_3a_2t^3y(y^2+a_0t^2)\;.
\end{align*}
To obtain the full ideal one has to add the equation used
to eliminate $w$ and saturate with respect to the variable $t$.

\begin{example}\label{eseven}
We use our equations to write down the deformation in the case that
$z^2-f$ is a surface singularity of type $\wt E_7$. 
We start from $z^2=y^4-\nu x^2y^2+x^4$. 
There is a second modulus, coming from
changing the generator $y$ to $y+\la x$. By a coordinate transformation
we can keep $y$ as generator, and take 
\begin{equation}\label{ellvgl}
z^2=y^4-\mu xy^3-\nu x^2y^2+x^4
\end{equation} 
as normalisation.
Fixing these moduli the component is one-dimensional. We describe
its total space.
Its equations are obtained by putting 
$b=a_1=0$, $a_0=a_4=-1$, $a_3=\mu$ and $a_2=\nu$ in the formulas above.
The equation $H_1$ becomes
\[
H_1=z^2-(y^2-t^2)^2-x_2^2+(\mu v+\nu x_2)(y^2-t^2)\;.
\]
It is reducible if $y^2=t^2$.
If $y^2 \neq t^2$, the equation $G_1$
shows that $x_3$ also can be eliminated.
There is one more equation not involving $x_3$:
\[
G_6=v^2-x_2(y^2+t^2)-2tzy+ \nu t^2(y^2-t^2)\;.
\]

The local ring of  the total space is a section ring $\oplus 
H^0(V,nL)$ for some ample line bundle on a projective surface $V$. 
The dimension of $H^0(V,L)$ is two.
We look at the
normalisation of a general hyperplane section $t=\la y$. 
We assume that $\la^2\neq1$,
Equation $G_6$ shows that $v/y$ is
in the normalisation. We set it equal to $(1-\la^2) x$.

If $\la^2+1\neq 0$ we can eliminate $x_2$ and find (after dividing
by $(1-\la^2)^2$) that 
that the normalisation
is given by 
\[
(z+2\la t^2-\nu\la y^2)^2=(\la^2+1)^2(y^4-\mu xy^3-\nu x^2y^2+x^4)\;,
\]
which for all $\la$ (with $\la^2+1\neq0$) is isomorphic to \eqref{ellvgl}.
The sections with $\la^2=1$ are reducible.
One sees that $V$ is a ruled surface over the elliptic curve
with equation \eqref{ellvgl}, with two sections of self-intersection
zero, $E_1$ and $E_{-1}$, and $L$ is given by the linear system
$|E_1+f|$, where $f$ is a fibre. The  general element of 
the linear system is a section
of the ruled surface with self intersection $2$.
\end{example}

\begin{remark}
Each nonnormal singularity $X\subset \C^N$  is the image of its 
normalisation $\wl X$, giving rise to a map $\wl X \to \C^N$.
Not every deformation of this map is flat. To give an example for
$X$ as above with normalisation $z^2=f(x,y)$, 
we observe that we can deform the map
by using the same map $\C^3\to\C^6$ and perturbing the equation
arbitrarily, say $z^2=f(x,y)+u$. For flatness of the images
one needs to perturb both equations 
$z^2=y\al + x_2\beta$, $zw=v\al+x_3\beta$
with elements in the local ring of the nonnormal singularity, where the
second is obtained from the first by multiplying with $x$ 
(on the normalisation). The perturbation $z^2=f(x,y)+u$ is not
of this type.
\end{remark}

\section{Deformations of a resolution}
A deformation of a resolution of a normal surface
singularity  blows down to a deformation of the singularity
if and only if $h^1(\sier{\wt X})$ is constant \cite{Rie, Wah}. If not, the total
space of a 1-parameter deformation still blows down to a
three-dimensional singularity, but the special fibre is no longer
normal.

Let more generally
$\pi_0\colon (Y,E)\to(X,0)$ be the contraction of an
exceptional set $E$ to a point, with $(Y,E)$ not
necessarily smooth, of dimension $n$. In principle $Y$ is a germ along $E$,
but we work always with a strictly pseudo-convex 
representative, which we denote with the same symbol $Y$. 
Then $\sier X=(\pi_0)_*\sier Y$. In
particular, $X$ is normal if $Y$ is normal. Consider now a
deformation $\tilde f\colon \cY \to S$ of $Y$ over a reduced base
space $(S,0)$. One can assume that $\tilde f$ has a 1-convex
representative. All the exceptional sets in all fibres can be
contracted: let $\pi \colon \cY \to \cX$ be the Remmert
reduction, so $\sier {\cX}=\pi_*\sier{\cY}$ with
$\tilde f=f\circ\pi$. Then $f \colon \cX\to S$ is a
deformation of $\cX_0:=f^{-1}(0)$. The question is whether $f$
also is a deformation of $X$, i.e., whether $X\cong \cX_0$. The
answer is the following \cite[Satz 3]{Rie}, cf.~\cite{Wah} for the
algebraic case.

\begin{theorem}
Let $\cY \mapright \pi \cX \mapright f S$ be the Remmert
reduction of the deformation $\tilde  f\colon \cY \to S$ of $Y$,
over a reduced base space $(S,0)$. Then the special fibre $\cX_0$ of
$f \colon \cX\to S$ is the Remmert reduction of $Y=\cY_0 $ if and
only if the restriction map $H^0(Y,\sier \cY)\to H^0(Y,\sier Y )$ is
surjective. This is the case if $\dim H^1(\cY_s,\sier {\cY_s})$ is
constant on $(S,0)$.
\end{theorem}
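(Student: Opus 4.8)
The plan is to translate the geometric statement into a statement about local rings and then read everything off the cohomology sequence attached to the inclusion of the central fibre. First I would record the two rings in play. Since $\pi_0\colon Y\to X$ is the Remmert reduction, the germ of $X$ at $0$ has local ring $\sier X=H^0(Y,\sier Y)$; since $\pi\colon\cY\to\cX$ is the Remmert reduction of the total space and the exceptional set is proper over $S$, Grauert's coherence theorem identifies the germ of $\cX$ at the image point with $\sier\cX=H^0(Y,\sier\cY)$ (sections over a strongly pseudoconvex neighbourhood of $E$), and then $\sier{\cX_0}=\sier\cX\otimes_{\sier S}\C=\sier\cX/\m_S\sier\cX$.

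Next I would build the comparison map. Restricting $\pi$ to the central fibre gives a proper map $\pi_0'\colon\cY_0=Y\to\cX_0$ contracting $E$ to the point $x_0$; as $\cX_0$ is a closed subgerm of the Stein germ $\cX$, it is Stein, and the Stein factorisation of $\pi_0'$ is exactly $Y\mapright{\pi_0}X\mapright{h}\cX_0$ with $h$ finite. The induced map $h^*\colon\sier{\cX_0}\to\sier X$ is, by construction, the restriction homomorphism $\rho\colon H^0(Y,\sier\cY)\to H^0(Y,\sier Y)$ read modulo $\m_S$. The assertion ``$\cX_0$ is the Remmert reduction of $Y$'' is precisely the assertion that $h$, equivalently $h^*$, is an isomorphism (both sides being single germs, $h$ finite and bijective, so $h$ is an isomorphism iff $h^*$ is).

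The heart is the cohomology sequence of $0\to\sier\cY\to\sier\cY\to\sier Y\to0$, multiplication by a coordinate $t$ when $S$ is a smooth curve. Flatness of $\tilde f$ makes $t$ a non-zero-divisor on $\sier\cY$, so the sequence is exact, and Grauert's theorem for the $1$-convex morphism $\tilde f$ makes $H^1(Y,\sier\cY)$ a coherent $\sier S$-module concentrated on $E$. Taking cohomology gives
\[
0\to H^0(Y,\sier\cY)\xrightarrow{\,t\,}H^0(Y,\sier\cY)
\xrightarrow{\,\rho\,}H^0(Y,\sier Y)\to
H^1(Y,\sier\cY)\xrightarrow{\,t\,}H^1(Y,\sier\cY)\,.
\]
From the left part, $\Ker\rho=t\,H^0(Y,\sier\cY)=\m_S H^0(Y,\sier\cY)$, so $h^*$ is \emph{always} injective; from the right part, $\rho$ is surjective iff multiplication by $t$ on $H^1(Y,\sier\cY)$ is injective. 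Hence $h^*$ is an isomorphism iff $\rho$ is surjective, which is the stated restriction-surjectivity criterion. For the sufficient condition, constancy of $\dim H^1(\cY_s,\sier{\cY_s})$ forces, by the theorem on cohomology and base change, $H^1(Y,\sier\cY)$ to be locally free over $\sier S$; being torsion free, multiplication by the non-zero-divisor $t$ is injective, whence $\rho$ is onto.

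The step I expect to need the most care is the passage to an arbitrary reduced base $S$, where $\m_S$ is not principal: there the identity $\Ker\rho=\m_S H^0(Y,\sier\cY)$ (that is, the injectivity of $h^*$) is no longer a one-line consequence of multiplication by a single $t$, and one must feed in the flatness of $\tilde f$ in the form $\mathrm{Tor}_1^{\sier S}(\sier\cY,\C)=0$ together with the base-change comparison. I would either reduce to the curve case by cutting $S$ with a generic non-zero-divisor and inducting on $\dim S$, or argue directly with the Koszul/base-change spectral sequence. In both routes the delicate point is that coherence and base change are applied to the non-proper morphism $\tilde f$, which is legitimate only because its non-isomorphism locus, the exceptional set, is proper over $S$.
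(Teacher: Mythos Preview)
Your proposal is correct and follows essentially the same route as the paper. The paper does not give a formal proof at all --- it cites Riemenschneider \cite[Satz 3]{Rie} for the statement and then, in the paragraph following the theorem, explains the $1$-parameter case by writing the commutative diagram that compares the long exact cohomology sequence for multiplication by $t$ on $\sier\cY$ with the short exact sequence $0\to\sier\cX\xrightarrow{t}\sier\cX\to\sier{\cX_0}\to0$; this is exactly your displayed sequence, and the conclusion (surjectivity of $\rho$ $\Leftrightarrow$ injectivity of $t$ on $H^1(\sier\cY)$, which holds when $H^1$ is free of constant rank) is the same. Your set-up via Stein factorisation and the explicit injectivity of $h^*$ makes precise what the paper's diagram encodes tacitly, and your remark that the passage to an arbitrary reduced base is the step needing care is exactly what the paper compresses into the single sentence ``In the proof one reduces to the case of a 1-parameter deformation.''
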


The converse of the last clause holds if $\dim H^2(\cY_s,\sier
{\cY_s})$ is constant \cite[Satz 5]{Rie}. This is automatically
satisfied in the case of interest to us, that $Y\to X$ is a
modification of a normal surface singularity. The result then 
says that a deformation of the modification blows down to a
deformation of the singularity if and only if $p_g$ is constant.

In the proof one reduces to the case of a 1-parameter deformation.
Let us consider what happens in that case, so we have a diagram
\[
\cdmatrix{\cY &
\smash{\mathop{\relbar\joinrel\longrightarrow}\limits^{\pi}}
 & \cX \\
    \makebox[0pt][l]{$\;\searrow$} &&\makebox[0pt][r]{$\swarrow\;$} \\
    &T}
\]
Let $t$ be the coordinate on $T$, and consider multiplication with
$t$ on $\sier\cY$ and $\sier\cX$. We get the commutative diagram
\[
\def\quad{}
\begin{array}{c@{}c@{}c@{}c@{}c@{}c@{}c@{}c@{}c}
 0 \to{} & H^0(\sier{\cY})&{}\mapright{\cdot t}{} &
H^0(\sier {\cY})&{}\lra {}&H^0(\sier {Y})&{}\lra{}&
 H^1(\sier {\cY})
&{}\mapright {\cdot t}
H^1(\sier {\cY})
\\
&\mapdown \cong && \mapdown \cong &&\mapdown {} \\
0 \to & \sier {\cX}&\mapright {\cdot t} & \sier {\cX}&\lra &\sier
{\cX_0} &\lra&0
\end{array}
\]
It shows that $\sier {\cX_0}$ is equal to $H^0(\sier{Y})=\sier X$
if and only if the restriction map $H^0(\sier {\cY})\to H^0(\sier
{\cY})$ is surjective. If  $\dim H^1(\cY_t,\sier {\cY_t})$ is
constant, then  $H^1(\cY,\sier {\cY})$ is a free
$\sier{\cY}$-module, on which multiplication with $t$ is injective,
so the restriction map $H^0(\sier {\cY})\to H^0(\sier {Y})$ is
surjective.

To study the converse, and what happens if $\dim H^1(\cY_t,\sier
{\cY_t})$ is not constant, we restrict to the case that  $H^2(\sier {\cY})=0$.
\begin{proposition}\label{remmert}
Let $\cY \mapright \pi \cX \mapright f T$ be the Remmert
reduction of the 1-parameter deformation $\tilde f\colon \cY \to T$
of the space $Y$, with $H^2(\sier {\cY})=0$. Then
\[
\dim H^1(\cY_0,\sier {\cY_0})=\dim H^1(\cY_t,\sier {\cY_t})-\dim
\bigl((\pi_0)_*\sier Y/\sier {\cX_0}\bigr)\;,
\]
where $t\neq0$.
\end{proposition}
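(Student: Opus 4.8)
The plan is to read everything off the long exact cohomology sequence attached to the sheaf sequence $0\to\sier\cY\mapright{\cdot t}\sier\cY\to\sier{\cY_0}\to0$, which is exact because $\cY$ is $T$-flat and hence $t$ is a nonzerodivisor on $\sier\cY$, and to interpret it as a sequence of modules over the discrete valuation ring $R=\sier T$. Since $\tilde f$ has a $1$-convex representative, coherence of the direct image $R^1\tilde f_*\sier\cY$ makes $M:=H^1(\cY,\sier\cY)$ a finitely generated $R$-module, and the hypothesis $H^2(\sier\cY)=0$ truncates the sequence to
\[
H^0(\sier\cY)\mapright{\cdot t}H^0(\sier\cY)\mapright{\rho}H^0(\sier{\cY_0})\mapright{\delta}M\mapright{\cdot t}M\lra H^1(\sier{\cY_0})\lra0\;.
\]
From the right-hand end I read off $H^1(\cY_0,\sier{\cY_0})=H^1(\sier{\cY_0})\cong M/tM$, while exactness at the first $M$ identifies $\operatorname{Im}\delta=\Ker(\cdot t\colon M\to M)=M[t]$, and exactness at $H^0(\sier{\cY_0})$ gives $\Ker\delta=\operatorname{Im}\rho$, so that $M[t]\cong\Coker\rho$.

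Next I would feed $M$ into the structure theorem for finitely generated modules over the DVR $R$, writing $M\cong R^{\oplus r}\oplus\tau$ with $\tau$ the finite-length torsion submodule, which over a DVR is automatically supported at the origin. Then $\dim_\C M/tM=r+\dim_\C\tau/t\tau$ and $\dim_\C M[t]=\dim_\C\tau[t]=\dim_\C\tau/t\tau$, so the special-fibre dimension differs from the free rank exactly by $\dim_\C M[t]$. The free rank $r$ is pinned down by base change away from $0$: for $t\neq0$ the torsion $\tau$ vanishes, $M$ is locally free on the punctured disc, and Grauert's base-change theorem gives $\dim H^1(\cY_t,\sier{\cY_t})=\rk_R M=r$. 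This isolates the correction term as $\dim_\C M[t]$.

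It remains to identify $M[t]$ with the blow-down defect, and here I use the commutative diagram already displayed. The vertical isomorphisms $H^0(\sier\cY)\cong\sier\cX$ identify the cokernel of $\cdot t$ on $H^0(\sier\cY)$ with $\sier\cX/t\sier\cX=\sier{\cX_0}$, while exactness of the top row realises that same cokernel as $\operatorname{Im}\rho\subseteq H^0(\sier{\cY_0})=(\pi_0)_*\sier Y$ (using $\cY_0=Y$). Hence $\sier{\cX_0}\cong\operatorname{Im}\rho$ and
\[
M[t]\cong\Coker\rho=(\pi_0)_*\sier Y/\sier{\cX_0}\;,
\]
so that combining the three identifications expresses $\dim H^1(\cY_0,\sier{\cY_0})$ in terms of $\dim H^1(\cY_t,\sier{\cY_t})$ and $\dim\bigl((\pi_0)_*\sier Y/\sier{\cX_0}\bigr)$, which is the asserted relation.

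The main obstacle I anticipate is the finiteness and base-change input rather than the bookkeeping: one must verify that the chosen $1$-convex representative really makes $M=H^1(\sier\cY)$ a finitely generated $R$-module with torsion concentrated at the origin, so that the generic rank genuinely equals $\dim H^1(\cY_t,\sier{\cY_t})$, and that Grauert's direct-image and base-change theorems apply in this analytic setting. Once these are in place, the passage over the DVR and the diagram chase identifying $\Coker\rho$ with $(\pi_0)_*\sier Y/\sier{\cX_0}$ are routine.
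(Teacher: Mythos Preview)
Your argument is correct and follows essentially the same route as the paper: both exploit the long exact cohomology sequence for multiplication by $t$ on $\sier\cY$, identify the generic rank of the $\sier T$-module $H^1(\sier\cY)$ with $\dim H^1(\cY_t,\sier{\cY_t})$, and compute that rank as $\dim\Coker(\cdot t)-\dim\Ker(\cdot t)$, with $\Ker(\cdot t)$ matched to the blow-down defect $(\pi_0)_*\sier Y/\sier{\cX_0}$ via the commutative diagram. You are merely more explicit about the structure theorem over the DVR and the base-change input that the paper leaves implicit; note incidentally that both arguments actually yield a \emph{plus} sign in the formula, consistent with the paper's subsequent use $p_g(\cX_t)=p_g(X)-\delta$.
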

\begin{proof}
The upper line in the commutative diagram extends as 
\[
\begin{split}
   0 \to{}  H^0(\sier{\cY})\mapright{\cdot t}
H^0(\sier {\cY})&\lra  H^0(\sier {Y})\lra{} \\
&
     {}\lra   H^1(\sier {\cY})
\mapright {\cdot t} H^1(\sier {\cY})\lra H^1(\sier {Y})\;.
\end{split}
\]
The generic rank of the $\sier T$-module $H^1(\sier {\cY})$ is  
equal to $\dim H^1(\cY_t,\sier {\cY_t})$, 
but also equal to $\dim \Coker(\cdot t) - \dim \Ker(\cdot t)$. This
proves the formula.
\end{proof}

In particular, if $Y$ is a resolution of the normal surface
singularity $X$, 
then the formula of the proposition says that 
$p_g( {\cX_t})=p_g(X)-\delta=p_g({\cX_0})$. 
Here we use essentially that we have a 1-parameter
deformation: over a higher dimensional base space $\delta$ will be
larger than $p_g(X)-p_g( {\cX_t})$: for a 1-parameter
curve in the base $p_g(X)-p_g( {\cX_t})$ gives how many
functions fail to extend, but it will depend on the curve
which ones do not extend.

\begin{remark}
If we have a smoothing, or more generally if $p_g(\cX_t)=0$, 
then $H^1(\sier{\cY})$ is $t$-torsion and isomorphic to $H^1(\sier Y)$.
\end{remark}

The above Proposition gives a way to construct normal surface singularities
with the same resolution graph as a given hypersurface singularity, 
but with lower $p_g$. If the drop in $p_g$ is equal to $\delta$, we 
start from a nonnormal model of the hypersurface singularity with
$\delta$-invariant $\delta$ and compute deformations without 
simultaneous resolution. If the singularity is quasi-homogeneous,
deformations of positive weight will have constant topological
type of the resolution.

\begin{example}\label{ex_endrie}
Consider the hypersurface singularity $z^2=x^7+y^7$ with
resolution graph:
\[
\unitlength=30pt
\def\ci{\circle*{0.26}}
\begin{picture}(2,2)(-1,-1)
\put(0,0){\makebox(0,0){\rule{8pt}{8pt}}}
\put(0,.3){\makebox(0,0)[b]{$\scriptstyle -4$}}
\put(0,0){\line(1,0){1}} \put(1,0){\ci}
\put(0,0){\line(-1,0){1}} \put(-1,0){\ci}
\put(0,0){\line(0,-1){1}} \put(0,-1){\ci}
\put(0,0){\line(1,1){0.7}} \put(0.7,0.7){\ci}
\put(0,0){\line(-1,1){0.7}} \put(-0.7,0.7){\ci}
\put(0,0){\line(1,-1){0.7}} \put(0.7,-0.7){\ci}
\put(0,0){\line(-1,-1){0.7}} \put(-0.7,-0.7){\ci}
\end{picture}
\]
This is a singularity with $p_g=3$, but its arithmetic
genus  is equal to two. The general, non-Gorenstein
singularity with the same graph has indeed $p_g=2$.
We can use the computations of the previous section.
We get a weighted homogeneous deformation by putting 
$b=x_2$, $a_0=a_1=a_2=a_3=0$ and $a_4=-y^3$.
The equation $G_3=x_3y-x_2v+tw$ shows that the $w$-deformation
has positive weight $-(8-9)=1$.

We  give the equations for the fibre at $t=1$. Then the 
equation $H_2$ lies in the ideal of the other ones
and we obtain  the following six equations

\begin{align*}
G_6&= v^2-x_2y^2-2zy-x_2x_3\;,
\\
G_8&= x_3v-x_2^2y-zx_2+2y^6-x_2y^3v \;,
\\
G_9&=x_3^2-x_2^3-v^2y^3+4x_2y^5\;,
\\
H_1&=z^2-y^7-x_2^2x_3+y^3x_2^3\;.
\\
G_1&=x_3y^2-x_2yv+zv-x_2^3\;,
\\
G_2& =x_3x_2y-x_2^2v+zx_3+vy^5-2x_2^2y^4\;.
\end{align*}
One checks that this ideal indeed defines a singularity with the
above resolution graph by resolving it; one possible metod is to blow up
a canonical ideal. 
\end{example}

\section{$\P^1$ as exceptional set}
In understanding normal surface singularities the resolution is 
a very important tool. For 3-fold singularities this is not the
case for several reasons. First of all, there is no unique minimal 
resolution. The combinatorics of a good resolution (i.e., the
exceptional divisor has normal crossings) seems prohibitive
in general. But 
now there is a new phenomenon,
that there may exist resolutions in which the exceptional set is not
a divisor, but an analytic set of lower dimension. This
is called a small resolution. It means that in a certain
sense the singularity is not too singular. For 3-fold singularities
we are talking about resolutions with exceptional set a curve.

If the exceptional curve $C$ is rational, then its normal bundle
splits as $\cO(a)\oplus \cO(b)$ with $a\geq b$.  Rather surprisingly,
the number $a$ can be positive. 
Laufer gave in \cite{Lau2} an example 
of a   curve $C\subset \wt X$ with 
normal bundle
$\cO(1)\oplus\cO(-3)$, which even contracts to a hypersurface
singularity $X$. 

Generalising earlier results of 
Ando (see \cite{And1}) and Nakayama\cite{Nak}, that  
contractabi\-lity limits the value of $(a,b)$ to $2a+b<0$, 
Ando proved \cite{And2}:

\begin{theorem}
Let $C$ be a smooth exceptional curve in an $m$-dimensional
manifold $\wt X$, and let $M$ be a
subbundle of the normal bundle $N_{C/\wt X}$ of maximal degree
$a$ and put $b=\deg N_{C/\wt X} -a$. Then $2a+b<0$ and $a+b<0$.
Moreover, if $C$ is rational, then $a+b\leq 1-m$. 
\end{theorem}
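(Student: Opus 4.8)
The plan is to run everything on a strictly pseudo-convex representative of the contraction $\pi\colon\wt X\to X$, as elsewhere in this paper, and to use its three standard features: $\pi_*\sier{\wt X}=\sier X$ with $X$ a normal Stein germ, $\pi$ an isomorphism off $C$, and $\pi^{-1}(0)=C$. Since $X$ is Stein it carries no compact positive-dimensional analytic subset, so every compact curve in a neighbourhood of $C$ is contained in $C$. Throughout write $N=N_{C/\wt X}$, take $M\subset N$ of maximal degree $a=\deg M$ (which I take of rank one for concreteness), and $b=\deg N-a=\deg(N/M)$, so that $\deg N=a+b$ and $2a+b=\deg M+\deg N$.

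I would first get $a+b<0$ by a hyperplane slice. Let $h\in\sier X$ be a general function vanishing at $0$ and $\wt S=\{\pi^*h=0\}\subset\wt X$; this is a principal divisor containing $C$, so $\sier{\wt X}(\wt S)|_C$ is trivial and $\wt S\cdot C=0$. Hence $N_{\wt S/\wt X}|_C$ has degree $0$, and the normal-bundle sequence $0\to N_{C/\wt S}\to N\to N_{\wt S/\wt X}|_C\to0$ gives $\deg N_{C/\wt S}=\deg N=a+b$, i.e.\ $C^2_{\wt S}=a+b$. The map $\wt S\to S$ is an isomorphism off $C$ and contracts $C$ to $0$; factoring it through the normalisation $\wl S$, the curve $C$ is exceptional for a resolution of the normal surface germ $\wl S$, whose intersection form is negative definite, so $a+b=C^2_{\wt S}<0$. (That $S$ itself is nonnormal---the theme of the paper---does no harm, as negative definiteness is needed only on $\wl S$; a general $h$ makes $\wt S$ smooth along the generic point of $C$, and finitely many bad points are absorbed on the resolution.) This step is insensitive to the genus of $C$.

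For the sharper $a+b\le 1-m$ when $C\cong\P^1$ I would invoke rigidity. Any deformation $C_t$ of $C$ in $\wt X$ maps under $\pi$ to a compact subset of the Stein germ $X$, hence to a point, forcing $C_t\subset\pi^{-1}(\mathrm{pt})$ and so $C_t=C$; thus the Douady space of $\wt X$ is $0$-dimensional at $[C]$. Its tangent space is $H^0(C,N)$ and its obstructions lie in $H^1(C,N)$, so the usual lower bound for the dimension of the parameter space yields $0=\dim_{[C]}\ge h^0(N)-h^1(N)=\chi(C,N)$. For $C\cong\P^1$ this reads $\chi(C,N)=\deg N+\rk N=(a+b)+(m-1)\le0$, i.e.\ $a+b\le1-m$. (For genus $\ge1$ the same bound reads $\deg N\le(m-1)(g-1)$, which is why rationality enters here but not in the previous step.)

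The delicate inequality is $2a+b<0$, i.e.\ $\deg M+\deg N<0$. The two arguments above fall short: the general slice realises $N_{C/\wt S}$ only as a subbundle of degree $a+b$, never the positive $M$, and feeding the section of $\P(N)$ determined by $M$ back into the rigidity bound gives merely $b\le1-m$, which is strictly weaker than $2a+b<0$ as soon as $a>0$ (already for Laufer's $\cO(1)\oplus\cO(-3)$). The point is that $M$ is \emph{positive}, so it does not integrate to a contractible surface through $C$, and the extra $a$ must be read off the second-order neighbourhood. Concretely I would blow up $C$, so that the exceptional divisor $E=\P(N)\to C$ is ruled with $M$ cutting out its negative section $\sigma(C)$, and then show that strong pseudo-convexity of $\wt X$ forces a positivity already at the level of $\mathrm{Sym}^2N^\vee=\mathcal I_C^2/\mathcal I_C^3$, which measures the failure of $M$ to integrate and supplies exactly the missing $a$. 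This second-order computation---carried out by Ando directly through explicit transition functions---is where I expect the real difficulty to lie, the inequalities $a+b<0$ and $a+b\le1-m$ being comparatively routine.
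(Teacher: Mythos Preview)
The paper does not give its own proof of this theorem: it is stated as a result of Ando, with a citation to \cite{And2} (and to \cite{And1}, \cite{Nak} for the earlier special cases), and the text moves on immediately to Ando's existence results. So there is no argument in the paper against which to compare your proposal.

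As for the proposal itself, your treatments of $a+b<0$ and of $a+b\le 1-m$ in the rational case are along the expected lines and essentially sound. The hyperplane-slice argument for $a+b<0$ is in the spirit of this paper (compare the theorem that follows the quoted one, where the author computes the normal bundle precisely by resolving the singularities of a hyperplane slice along $C$), though you are right that the slice $\wt S$ is singular along $C$ and the identification $C^2_{\wt S}=a+b$ needs the passage to a resolution that you only indicate. The rigidity-plus-Riemann--Roch argument for $a+b\le 1-m$ is standard and correct.

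The genuine gap is exactly where you locate it: you do not prove $2a+b<0$. You sketch a plan (blow up $C$, look at $\mathrm{Sym}^2 N^\vee$, extract the extra $a$ from the obstruction to integrating the positive sub-bundle $M$) but do not carry it out, and you yourself flag this as ``the real difficulty.'' That is accurate: the inequality $2a+b<0$ is the substantive part of Ando's theorem and genuinely requires the analysis of higher infinitesimal neighbourhoods that your final paragraph only gestures at. As written, the proposal proves the two easier inequalities and leaves the central one open.
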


Ando \cite{And,And2} has also existence results. In particular,
in dimension 3 he exhibits examples 
with the maximal normal bundle
$\cO(n)\oplus\cO(-2n-1)$, by
giving, in the style of Laufer,
transition functions between two copies of $\C^3$.
The resulting singularity is not  Cohen-Macaulay for $n>1$.
Consider more generally a rational curve with normal bundle ot type $(a,b)$
with $a>1$.
To see that $H^2_{\{0\}}(X,\sier X)\cong H^1(\wt X,\sier{\wt X})\neq 0$, 
let $\cJ$ be the ideal sheaf of $C$ in $\wt X$ and look at
the exact sequences
\begin{gather*}
0 \lra \cJ \lra \sier{\wt X} \lra \sier C\lra 0\;,\\
0\lra \cJ^2 \lra \cJ \lra \cJ/\cJ^2 \lra 0\;.
\end{gather*}
We have a surjection $H^0(\sier{\wt X}) 
\twoheadrightarrow H^0(\sier C)=\C$.
As $C$ is rational, $H^1(\sier C)=0$ and therefore
$H^1(\wt X,\sier{\wt X})\cong  H^1(\wt X, \cJ) $.
Because $C$ is a curve,  $H^2(\wt X,\cJ^2)=0$ and
we get a surjection $H^1(\wt X, \cJ) \twoheadrightarrow
H^1(C,\cJ/\cJ^2)$. As $\cJ/\cJ^2$ 
is the dual of the normal 
bundle, we have that $h^1(C,\cJ/\cJ^2)=a-1$ and therefore
$h^1(\sier{\wt X})\geq a-1$.

Pinkham gave a construction for $C$ with normal bundle
$\cO(1)\oplus\cO(-3)$ \cite{Pin2},
using smoothings of partial resolutions of rational double points.
The easiest example, starting from a $D_4$-singularity, is decribed
in detail in \cite{Ste-ne}.

Here we generalise Pinkham's construction to exceptional 
curves with other normal bundles. 
%
Let $\wl H$ be a normal surface singularity (in the end $H$
will be a general hyperplane section of a 3-fold singularity)
and let  $\wh H$ be a partial resolution of  $\wl H$ 
with irreducible exceptional
locus $C$, such that the only singularities of $\wh H$
are hypersurface singularities.

The deformation space of $\wh H$ is smooth. Indeed, the sheaf
$\cT^1_{\wh H}$ is concentrated in the singular points, and
$\cT^2_{\wh H}=0$, as there are only hypersurface singularities.
The local-to-global spectral sequence for $T_{\wh H}^\bullet$
gives that $T_{\wh H}^2=0$, so all deformations are unobstructed,
and moreover we get the exact sequence
\[
0 \lra H^1(\cT^1_{\wh H}) \lra T_{\wh H}^2 \lra \bigoplus_p T^1_{\wh H,p}
\lra 0
\;.
\]
Therefore all singular points $p\in \wh H$ can be smoothed
independently.

We take $\wt X$ to be a  1-parameter smoothing of $\wh H$
with smooth total space (this is possible, as all singularities
are hypersurfaces). Moreover, we can arrange that  the general fibre
does not contain exceptional curves.
Then the contraction $\pi\colon \wt X \to X$
with the curve $C$ as exceptional locus gives
an isolated 3-fold singularity. In general its hyperplane section
$H$ is a nonnormal surface singularity, with $\delta(H)=
p_g(\wl H)$, by Proposition \ref{remmert}.

\begin{example}\label{ex_xtnme}
Let $\wl H$ be  a singularity, whose
resolution has  a central rational
curve of self-intersection $-n-1$, intersected by $2n+1$  
$(-2)$-curves.
A quasi-homogeneous singularity with this resolution is the
hypersurface singularity $Y_{2n+1}$ with equation  $z^2=f_{2n+1}(x,y)$,
where $f_{2n+1}$ is a square-free binary form. 
The partial 
resolution $\wh H$ to be considered is obtained by blowing
down the $(-2)$-curves, intersecting the central curve. 
For $n>1$ this is the canonical model,
while for $n=1$ we have $D_4$.
The next Theorem shows that the 
construction yields a 3-dimensional manifold $\wt X$
with an exceptional rational curve, whose normal bundle is
$\cO(n)\oplus\cO(-2n-1)$.
\end{example}

Rational double points are absolutely isolated, i.e., they can be resolved
by blowing up points. 
Each sequence of blowing ups gives a partial resolution. We define the
\textit{resolution depth} of an exceptional component $E_i$
as the minimal number of blow ups required to obtain a partial resolution
on which the curve $E_i$ appears. 
This is the desingularisation depth of \cite{LT}, shifted by one.
It is easily computed from the resolution graph.
The fact that $C$ is smooth restricts the possible curves $E_i$
in a rational double points configurations, which intersect $C$, to those
with multiplicity one in the fundamental cycle of the configuration.

\begin{theorem}
Let $\wt X$ be a 1-parameter smoothing with smooth
total space
of a partial resolution $\wh H$
of a normal surface singularity $\wt H$ with exceptional
set a smooth rational curve $C$ and $k$ rational double points as 
singularities. Let $-c$ be the self-intersection of the curve $C$
on the minimal resolution $\wt H$ of $\wl H$. Suppose that
$C$ intersects a curve of resolution depth $b_j$ in the 
$j$th rational double point configuration on $\wt H$. Put
$b=b_1+\dots +b_k$.
Then the normal bundle
of the exceptional curve $C\subset \wh H$ in $\wt X$ is
$\cO(b-c)\oplus\cO(-b)$.
\end{theorem}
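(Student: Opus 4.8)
The plan is to compute the normal bundle $N_{C/\wt X}$ by relating it to the geometry of the minimal resolution $\wt H$ and the smoothing direction. The starting point is the short exact sequence of normal bundles for the inclusions $C\subset \wh H\subset \wt X$. Since $C$ is a smooth rational curve and $\wh H$ is a surface smooth along $C$ (its singularities being the rational double points, which $C$ passes through but at which $C$ meets a curve of multiplicity one in the fundamental cycle, so $\wh H$ is smooth at those contact points by the assumption that $C$ is smooth), we have
\[
0 \lra N_{C/\wh H} \lra N_{C/\wt X} \lra N_{\wh H/\wt X}|_C \lra 0\;.
\]
Here $N_{\wh H/\wt X}|_C$ is a line bundle on $C\cong\P^1$, and since $\wt X$ is a $1$-parameter smoothing, $\wh H$ is a Cartier divisor (the fibre over $0$) with trivial normal bundle in $\wt X$; hence $N_{\wh H/\wt X}|_C\cong\sier C=\cO(0)$. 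So the second summand contributes degree $0$, and the total degree of $N_{C/\wt X}$ equals $\deg N_{C/\wh H}$.

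The first step, then, is to compute $\deg N_{C/\wh H}=C^2$ as computed on $\wh H$. On the minimal resolution $\wt H$ we are given $C^2=-c$. Passing from $\wt H$ to $\wh H$ blows down the $(-2)$-curves in each rational double point configuration that lie between $C$ and the blow-down, and each such blow-down raises the self-intersection of $C$. The key computation is to track how $C^2$ changes under these blow-downs: blowing down a chain of $(-2)$-curves meeting $C$, where the relevant curve has resolution depth $b_j$, raises $C^2$ by exactly $b_j$ in the $j$th configuration. Summing over the $k$ configurations gives $C^2=-c+b$ on $\wh H$, so $\deg N_{C/\wt X}=b-c$. I expect the cleanest way to see the contribution $b_j$ is to use the interpretation of resolution depth via the sequence of point blow-ups: the curve of depth $b_j$ becomes, after contracting the intervening exceptional curves, a smooth point of $\wh H$ through which $C$ passes, and the self-intersection increment is read off from how many times $C$ must be separated, i.e.\ $b_j$. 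This matches the intended normal bundle $\cO(b-c)\oplus\cO(-b)$, whose total degree is indeed $(b-c)+(-b)=-c$, consistent with $\deg N_{C/\wt X}=b-c$ only if we also identify the splitting type, which is the real content.

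The second and harder step is to determine not merely the degree but the \emph{splitting type} of $N_{C/\wt X}$, i.e.\ to show it is $\cO(b-c)\oplus\cO(-b)$ rather than some other bundle of the same total degree. The subbundle $N_{C/\wh H}\hookrightarrow N_{C/\wt X}$ gives a sub-line-bundle of degree $b-c$, which produces $\cO(b-c)$ as a subbundle; since on $\P^1$ every rank-two bundle splits, it remains to identify the quotient. The total degree is $-c$ by the previous paragraph combined with the triviality of $N_{\wh H/\wt X}|_C$ — wait, this forces the quotient to have degree $-c-(b-c)=-b$, giving the splitting $\cO(b-c)\oplus\cO(-b)$ provided the extension actually splits with $\cO(b-c)$ as a genuine summand. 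The main obstacle is precisely this: verifying that $\cO(b-c)$ is the maximal-degree subbundle, so that the extension does not admit a sub-line-bundle of higher degree that would force a different splitting type (for instance $\cO(b-c+1)\oplus\cO(-b-1)$). I would resolve this by examining the smoothing more closely: because $\wt X$ is a general smoothing and the general fibre contains no exceptional curves, the deformation of $\wh H$ moves $C$ maximally in the $\wt X$ direction, and the connecting data of the smoothing over each rational double point (controlled by the resolution depth $b_j$) pins down the quotient bundle degree. Concretely, one computes $H^0(C,N_{C/\wt X}(-1))$ or equivalently checks that $C$ does not deform in $\wt X$ (the general fibre being free of exceptional curves forces $H^0(N_{C/\wt X})$ to have the expected dimension), which rules out a positive quotient summand and fixes the splitting as $\cO(b-c)\oplus\cO(-b)$.
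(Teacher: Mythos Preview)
Your argument rests on the claim that $\wh H$ is smooth along $C$, but this is false. By hypothesis $\wh H$ carries $k$ rational double points, and $C$ passes through each of them: on the minimal resolution $\wt H$ the curve $C$ meets the $j$th RDP configuration, so on $\wh H$ the image of $C$ goes through the $j$th singular point. The multiplicity-one condition on the fundamental cycle ensures only that $C$ itself stays smooth after contraction, not that the ambient surface is smooth there. Consequently $C$ is not Cartier on $\wh H$ at those points, the conormal sheaf of $C$ in $\wh H$ is not locally free, and your short exact sequence of line bundles for $C\subset\wh H\subset\wt X$ does not exist. This is also why your claim that contracting the $j$th configuration raises $C^2$ by the integer $b_j$ is wrong: on $\wh H$ the $\mathbb Q$-valued self-intersection of $C$ picks up a genuinely fractional contribution from each RDP (for instance $\tfrac12$ from an $A_1$ point), not $b_j$.

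You in fact detect the problem yourself without diagnosing it: your sequence forces $\deg N_{C/\wt X}=b-c$, whereas the target bundle $\cO(b-c)\oplus\cO(-b)$ has total degree $-c$; the missing $b$ is precisely what the singular points contribute. The paper repairs this by first blowing up $\wt X$ at the singular points of $\wh H$ (performing $b_j$ point blow-ups over $P_j$, hence $b$ in total) to obtain $\sigma\colon\wt Y\to\wt X$ in which the strict transforms $\wt C\subset\wt H$ lie in a genuinely smooth surface. On $\wt Y$ the normal-bundle sequence is valid, with $N_{\wt C/\wt H}\cong\cO(-c)$ and $N_{\wt H/\wt Y}|_{\wt C}\cong\cO(-2b)$; the second computation is the heart of the proof and uses that the last exceptional divisor over $P_j$ occurs with multiplicity $2b_j$ in the total transform of the fibre $\{t=0\}$. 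Twisting the resulting sequence by $\cO(D)$, $D=\sum b_jP_j$, to undo the blow-ups gives $0\to\cO(b-c)\to N_{C/\wt X}\to\cO(-b)\to 0$, and one then argues splitting. Your outline never produces the quotient degree $-b$, and cannot, because the mechanism generating it (the multiplicities $2b_j$) is invisible without passing to the embedded resolution.
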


\begin{proof}
Let  $\sigma\colon \wt Y\to \wt X$ be an embedded resolution 
of $\wh H$. Denote by $\wt H$ the strict transform of $\wh H$ 
and by $\wt C$ the strict transform of the curve $C$ (which is isomorphic
to $C$). As we are only
interested in a neighbourhood of $\wt C$, it actually suffices to
blow up the threefold $\wt X$ in points lying on $C$ until the
strict transform of $\wh H$ is smooth along the strict transform of $C$.
The number of blow ups needed is $b$. 

Let $P_j\in C$ be the $j$th singular
point of $\wh H$; identifying $\wt C$ with $C$ it is also
the intersection point on $\wt H$ of $\wt C$ and 
the $j$th rational double point configuration.
The normal bundle $N_{\wt C/\wt Y}$ is isomorphic to
$N_{C/\wt X}\otimes \sier{C}(-D)$,
where we write $D$ for the divisor $\sum b_j P_j$.

On $\wt Y$ we have the exact sequence
\[ 
0\lra N_{\wt C/\wt H}  \lra  N_{\wt C/\wt y} \lra 
N_{\wt H/\wt Y}|_{\wt C} \lra 0\;.
\]
Correspondingly there is an exact sequence on $\wt X$:
\begin{equation}\label{exseq}
0\lra N' \lra  N_{C/\wt X} \lra N'' \lra 0\;,
\end{equation}
with $N'\cong N_{\wt C/\wt H} \otimes \sier{C}(D)$
a bundle, which outside the singular points
coincides with the normal bundle $N_{C/\wt H}$;
note that $C$ is not a Cartier divisor in $\wt H$ at the singular 
points.

As $\wt C$ is a rational curve, $N_{\wt C/\wt H} \cong \sier{\wt C}(-c)$.
To compute $N_{\wt H/\wt Y}|_{\wt C}$ we note that the
total transform of $\wh H$ is of the form $\wt H + \sum f_iF_i$, with 
$F_i$ the exceptional divisors, and that it is the divisor $t=0$
with trivial normal bundle. 
The only divisors, which intersect $\wt C$, are the ones coming from
the last blow up in the points $P_j$, and occur with multiplicity
$2b_j$.
Therefore
$N_{\wt H/\wt Y}|_{\wt C} \cong \sier{\wt C}(-\sum 2f_jF_j)=
\sier{\wt C}(-2b)$.
It follows that the
exact sequence \eqref{exseq} has the form
\[
0 \lra \cO(b-c) \lra N_{C/\wt X}\lra \cO(-b) \lra 0.
\]
As $H^0(N_{C/\wt X})=H^0(\cO(b-c))$, the sequence splits.
\end{proof}

\begin{example}
As noted by Ando \cite{And2}, there exist exceptional
rational curves with normal bundle $(1,-4)$, which contract to
Cohen-Macaulay singularities, and others which do not.
We obtain this normal bundle starting from a RDP resolution with a
central rational curve of self-intersection $-3$ (on the minimal
resolution) with four $A_i$ singularities on it.
Consider the following two resolution graphs:
\[
\unitlength=30pt
\def\vir{\makebox(0,0){\rule{8pt}{8pt}}}
\def\ci{\circle*{0.26}}
\begin{picture}(3,3)(-1,-2)
\put(0,0){\vir}
\put(0.2,0.2){\makebox(0,0)[bl]{$\scriptstyle -3$}}
\put(0,0){\line(1,0){1}} \put(1,0){\ci}
\put(0,0){\line(-1,0){1}} \put(-1,0){\ci}
\put(0,0){\line(0,-1){1}} \put(0,-1){\ci}
\put(0,0){\line(0,1){1}} \put(0,1){\ci}
\end{picture}
\qquad
\begin{picture}(4,4)(-2,-2)
\put(0,0){\vir}
\put(0.2,0.2){\makebox(0,0)[bl]{$\scriptstyle -3$}}
\put(0,0){\line(1,0){1}} \put(1,0){\ci}
\put(1,0){\line(1,0){1}} \put(2,0){\ci}
\put(0,0){\line(-1,0){1}} \put(-1,0){\ci}
\put(-1,0){\line(-1,0){1}} \put(-2,0){\ci}
\put(0,0){\line(0,-1){1}} \put(0,-1){\ci}
\put(0,-1){\line(0,-1){1}} \put(0,-2){\ci}
\put(0,0){\line(0,1){1}} \put(0,1){\ci}
\put(0,1){\line(0,1){1}} \put(0,2){\ci}
\end{picture}
\]
The graph on the left is a rational quadruple point graph, 
while a normal singularity
with the second graph is minimally elliptic and equisingular to
$z^3=x^4+y^4$. The exceptional  $(1,-4)$-curve comes from a nonnormal
model with $\delta=1$. 
\end{example}

\begin{example}[Example \ref{ex_xtnme} continued]
Ando's examples \cite{And,And2}  of the extremal case $(n,-2n-1)$
are of type in the Example. 
With adapted variable names  his exceptional $\P^1$
is covered by two charts having
coordinates $(x,\eta,\zx)$  and  $(\xi,y,\zy)$ with
transition functions
\[
\begin{aligned}
  x  & =  \xi^{2n+1} y +\zy^2+\xi^{2n}\zy^3 \\
  \eta & = \xi^{-1}  \\
  \zx & =  \zy\xi^{-n} 
\end{aligned}
\]
So $x$ is a global function, as is $\xi y+\zy^3=\eta^{2n}x-\zx^2$. 
Other functions are more complicated.
A general hyperplane section is obtained by setting a linear combination
of these two functions to zero.
In the first chart we get $\zx^2=x(a+\eta^{2n})$ and in the second
$\zy^2+(1/a+ \xi^{2n})(\xi y+\zy^3)=0$. This is indeed the canonical
model of a singularity of 
type $z^2=f_{2n+1}(x,y)$.
\end{example}

\begin{remark}
For $n=2$ we have the singularity $z^2=f_5$, which is minimally elliptic
and therefore every singularity with the same resolution graph
is a double point. For $n>2$ this is no longer true. For $n=3$
we gave in Example \ref{ex_endrie} equations for a singularity
with the same resolution graph as $z^2=f_7$, 
which is not Gorenstein. As we constructed
it as deformation of a nonnormal model, with $\delta=1$,
of a hypersurface singularity, a nonnormal $\delta=2$ model of this
singularity is a deforation of a $\delta=3$ model of the 
hypersurface.
\end{remark}

Before giving other new examples with  normal bundle $(n,-2n-1)$
we recall Koll\'ar's length invariant \cite[Lecture 16]{CKM}:
\begin{definition}
The length $l$ of the small contraction $\pi : (\wt X,C) \to
(X, p)$  with irreducible exceptional curve C is
\[
l = \lg \sier{\wt X}/\pi^*\mathfrak{m}_{X,p}\;.
\]
\end{definition}
The length is equal to the multiplicity of the 
maximal ideal cycle of $\wl H$ at the
strict transform of the exceptional curve $H$.

\begin{example}
Consider the graph:
\[
\unitlength=30pt
\def\vir{\makebox(0,0){\rule{8pt}{8pt}}}
\def\ci{\circle*{0.26}}
\begin{picture}(5,2)(-2,-1.75)
\put(0,0){\vir}
\put(0,0.2){\makebox(0,0)[b]{$\scriptstyle -n-1$}}
\put(0,0){\line(1,0){1}} \put(1,0){\ci}
\put(1,0){\line(1,0){1}} \put(2,0){\ci}
\put(2,0){\line(1,0){1}} \put(3,0){\ci}
\put(0,0){\line(-1,0){1}} \put(-1,0){\ci}
\put(-1,0){\line(-1,0){1}} \put(-2,0){\ci}
\put(0,0){\line(-1,-1){1}} 
\put(0,0){\line(1,-1){1}} 
\put(-1.1,-1.2){$\underbrace{
\begin{picture}(2.2,1)(-0.1,-0.2)
\put(0,0){\ci}\put(1,0){\makebox(0,0){$.\;\;.\;\;.\;\;.$}}
\put(2,0) {\ci}
\end{picture}
}_{2n-1}$}
\end{picture}
\]
An example of a normal surface singularity with this graph 
is 
\[
z^2=y(y^{4n-2}+x^{6n-3})\;.
\]
These singularities can be thought of as generalisations of
$E_7$, just as those of the type  $z^2=f_{2n+1}(x,y)$
are generalisations of $D_4$.
A  smooth total space of a 1-parameter smoothing of the canonical model
has as exceptional set a rational curve with normal bundle $(n,-2n-1)$.
The invariant $l$ has the value $4$.

For $n=2$ the singularity has $p_g=5$.
\end{example}

From our computations in Section \ref{sect_comp}
we can draw the following conclusion.

\begin{proposition}
The singularity obtained by contracting a rational curve with normal 
bundle $(2,-5)$ has embedding dimension at least 7.
\end{proposition}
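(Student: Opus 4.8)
The plan is to invoke the hyperplane section principle and thereby reduce the three–dimensional assertion to the surface computation of Section~\ref{sect_comp}. Write $\pi\colon(\wt X,C)\to(X,p)$ for the contraction of the curve $C$ with $N_{C/\wt X}=\cO(2)\oplus\cO(-5)$. This is exactly the extremal case $(n,-2n-1)$ with $n=2$ of Example~\ref{ex_xtnme}, so the normalisation $\wl H$ of a general hyperplane section $H$ of $(X,p)$ is the minimally elliptic double point $z^2=f_5(x,y)$ with $f_5$ a squarefree binary quintic, and $p_g(\wl H)=1$. By Proposition~\ref{remmert} the section $H$ is nonnormal with $\delta(H)=p_g(\wl H)=1$, so $H$ is one of the two $\delta=1$ models of $z^2=f_5$ studied in Section~\ref{sect_comp}.

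First I would show that $H$ must be the model with $c=0$, whose local ring has the six generators~\eqref{newgener}. Since $a=2>1$, the cohomology computation given earlier (valid for any rational curve with $a>1$) shows $h^1(\sier{\wt X})\geq a-1=1$, so $X$ is not Cohen--Macaulay and the family of nearby hyperplane sections $\{l=t\}$ is a one–parameter deformation of $H$ along which $\delta$ jumps from $0$ (for $t\neq0$, where the section is smooth since $p\notin\{l=t\}$ and $X$ is smooth off $p$) to $1$; hence it admits no simultaneous normalisation. The model with $c\neq0$, with equations~\eqref{eq-betw-gen}, has embedding dimension $4$ but, by the first proposition of Section~\ref{sect_comp}, possesses \emph{only} deformations with simultaneous normalisation; this excludes it. Thus $H$ is the model with $c=0$. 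As $f_5$ is homogeneous of degree five it lies in $(y^2,yx^2,x^4)$ and contains none of the monomials $y,z,x^2,xy,xz,x^3$, so the six generators of~\eqref{newgener} are minimal and $H$ has embedding dimension $6$.

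Finally I would descend the count. Since $X$ is normal and irreducible, $\sier X$ is a domain and a general linear form $l\in\m$ is a nonzerodivisor, so $\sier H=\sier X/(l)$; moreover $l\notin\m^2$, whence $\m_H/\m_H^2=\m/(\m^2+(l))$ has dimension one less than $\m/\m^2$, i.e.\ cutting by $l$ lowers the embedding dimension by exactly one. Therefore the embedding dimension of $(X,p)$ equals that of $H$ plus one, namely $7$, giving the asserted bound of at least $7$. The main obstacle is the identification in the second paragraph: one must argue rigorously that the abstract nonnormal structure of the hyperplane section coincides with the normalised model of Section~\ref{sect_comp} having $L=\ker l$ with $c=0$, rather than merely agreeing in $\delta$ and in the resolution graph. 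A direct approach on $\wt X$ sidesteps this but is harder: the conormal sequence gives $H^0(\cJ/\cJ^2)=H^0(\cO(-2)\oplus\cO(5))$ of dimension $6$, which accounts only for the generators of order one along $C$, and one would still have to produce a function vanishing to order two on $C$ but lying outside $\m^2$ — precisely the delicate seventh generator that the hyperplane section bookkeeping supplies automatically.
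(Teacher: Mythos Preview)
Your overall line is what the paper intends: reduce to the surface $H$ via the hyperplane section principle, observe that the family of hyperplane sections smooths $H$ and hence cannot admit a simultaneous normalisation, and then invoke the computation of Section~\ref{sect_comp} that the embedding-dimension-$4$ model (the case $c\neq0$) has \emph{only} deformations with simultaneous normalisation. The paper's proof is the single line ``From our computations in Section~\ref{sect_comp} we can draw the following conclusion'', and you have unpacked that correctly.

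The gap is in your first paragraph. The sentence ``This is exactly the extremal case $(n,-2n-1)$ with $n=2$ of Example~\ref{ex_xtnme}, so the normalisation $\wl H$ \dots\ is the minimally elliptic double point $z^2=f_5(x,y)$'' is a non sequitur: the normal bundle alone does not determine the contraction. The paper itself, in the example immediately preceding the Proposition, constructs a $(2,-5)$ contraction from the $E_7$-type singularity $z^2=y(y^{6}+x^{9})$, whose hyperplane section has $\delta=p_g(\wl H)=5$, well outside the $\delta=1$ framework of Section~\ref{sect_comp}. So your argument (and, read literally, the paper's) establishes the bound only for those $(2,-5)$ contractions whose hyperplane section has $\delta=1$ with double-point normalisation. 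By contrast, the ``main obstacle'' you flag in your last paragraph is not the real difficulty: once $\delta(H)=1$ and $\wl H=\{z^2=f\}$ are granted, every codimension-one subring of $\wl\cO$ is automatically of the form $\C+L+\m_{\wl\cO}^2$ (the conductor equals $\m_\cO$, which is then an ideal of $\wl\cO$ of colength $2$, hence of this shape), so $H$ must be one of the two models of Section~\ref{sect_comp}, and your simultaneous-normalisation argument then correctly forces $c=0$.
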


Explicit equations for the case that the normalisation $\wl H$ is
given by $z^2=y^5+x^5$, can be obtained
from the equations $G_i$, $H_j$ of Section \ref{sect_comp} by putting $b=1$,
$a_4=a_4-y$, $a_3=a_3+b_4y$, $a_2=a_2+b_3y$, $a_1=a_1+b_2y+c_2y^2$
and $a_0=a_0$.

The equation
\begin{multline*}
H_1=z^2-(y-a_4)(y^2+a_0t^2)^2-x_3x_2 
    +((a_3+b_4y)v+(a_2+b_3y)x_2)(y^2+a_0t^2) \\
  \qquad+(a_1+b_2y+c_3y^2)x_2v
+a_0x_2^2 +(a_3+b_4y)t^3z+(y-a_4)t^4x_2\;.
\end{multline*}
shows that restricted to $t=0$ one has the simultaneous
normalisation
\begin{multline*}
z^2-y^5-x^5\\+a_4y^4+a_3xy^3+a_2x^2y^2+a_1x^3y+a_0x^4 
    +b_4xy^4+b_3x^2y^3+b_2x^3y^2+c_3x^3y^3
\;.
\end{multline*}

To compute the simultaneous canonical model we first eliminate
$w$ and $x_3$, assuming  $t\neq0$. To simplify the formulas
we suppress the $b_i$ and $c_3$. We write the  
resulting  equations in  determinantal form: 
\begin{equation}\label{determatrix}
\begin{pmatrix}
v(y^2+a_0t^2)-x_2yt^2-zt^3-a_3t^6 &
v^2-(y-a_4)t^6\\
-(y^2+a_0t^2)^2+x_2t^4&
-v(y^2+a_0t^2)-x_2yt^2-zt^3\\
2y(y^2+a_0t^2)-vt^2-a_1t^4&
2vy+x_2t^2+a_2t^4
\end{pmatrix}\;.
\end{equation}
We apply a Tjurina modification
followed by normalisation. On the first chart  $\cU_y$
we have the hypersurface 
\[
\zy^2-y\xi^5-y+a_0\xi^4+a_1\xi^3+a_2\xi^2+a_3\xi+a_4-\half14t^2\xi^6
\]
and the map to the singularity $H$ is given by quite complicated
formulas:
\begin{align*}
x_2&=y^2\xi^2-2yt(\zy-t\xi^3)+
-t^3\xi(\zy-\half12t\xi^3)-a_0t^2\xi^2-a_1t^2\xi-a_2t^2
\\
v&=(y^2+a_0t^2)\xi-yt^2\xi^2+t^3(\zy-\half12t\xi^3)
\\
z&=y^2(\zy-\half52t\xi^3)+yt^2\xi(3\zy-\half52t\xi^3)+
t^4\xi^2(\zy-\half12t\xi^3)\\&{}
 \qquad-a_0t^2(\zy-\half32t\xi^3)+2ya_0t\xi^2
+ya_1t\xi+a_1t^3\xi^2+ya_2t+a_2t^3\xi
\end{align*}
The expressions for $w$ and $x_3$ are even longer; they can be computed
from the equations used to eliminate these variables.

In the second chart $\cU_x$ we have the hypersurface 
\[
\zx^2-x-x\eta^5+a_0+a_1\eta+a_2\eta^2+a_3\eta^3+a_4\eta^4-\half14t^2\eta^8
\]
and the transition functions
$y=x\eta+t\zx+\frac12t^2\eta^4$, $\xi=\eta^{-1}$ and 
$\zy=\zx\eta^{-2}+\frac12t\eta^{-3}+\frac12t\eta^2$.

We see that for $a_i=0$ the family of curves given in $\cU_y$ by
$y=\zy-\frac12t\xi^3=0$ and in $\cU_x$ by $x=\zx+\frac12t\eta^4=0$ is
exceptional. Then the determinant \eqref{determatrix} 
describes for $t\neq0$ a singularity
isomorphic to the
cone over the rational normal curve of degree three.
The general 1-parameter smoothing of the canonical model is
obtained by taking the $a_i$ as functions of $t$. We can take
$a_0=t$, $a_i=0$ for $i>0$. Then in the second chart $\tau=t(1-\half14t\eta^8)$
can be eliminated, as $\tau=x(1+\eta^5)-\zy^2$, so $(x,\eta,\zy)$ are
coordinates. We can write the equation in the first chart
as $\zy^2+\xi^4(t-\half14t\xi^2-y\xi)=y$, so 
$(\zy,\xi,\sigma=t-\half14t\xi^2-y\xi)$ are coordinates. The transition 
functions are power series.


\begin{thebibliography}{99}


	

\bibitem{And}  %
Tetsuya Ando,
{\sl An example of an exceptional $(-2n-1,n)$-curve in an algebraic
 $3$-fold\/.}
 Proc. Japan Acad. Ser. A Math. Sci.  {\bf66}  (1990),  269--271.

\bibitem{And1}  %
Tetsuya Ando, 
{\sl  On the normal bundle of {\bf P}$^1$ 
in a higher-dimensional projective variety\/.} 
Amer. J. Math. {\bf 113} (1991), 949--961.

\bibitem{And2} %
Tetsuya  Ando, 
{\sl On the normal bundle of an exceptional curve in a higher-dimensional
 algebraic manifold.\/}
 Math. Ann.  {\bf 306}  (1996),  625--645.

\bibitem{AN} %
M. Artin and M.  Nagata,  
{\sl Residual intersections in Cohen-Macaulay rings.\/}
 J. Math. Kyoto Univ.  {\bf 12}  (1972), 307--323.
			

\bibitem{ADHPR}  %
Alf Aure, Wolfram  Decker, Klaus  Hulek, Sorin  
Popescu and Kristian  Ranestad,
{\sl Syzygies of abelian and bielliptic surfaces in $P\sp 4$.\/}
Internat. J. Math. {\bf 8}  (1997),   849--919.

\bibitem{CL} %
Hung-Jan Chiang-Hsieh and Joseph Lipman, 
{\sl A numerical criterion for simultaneous normalization.\/} 
Duke Math. J. {\bf133} (2006), 347--390.

\bibitem{CKM}
H. Clemens, J. Koll\'ar and S. Mori, 
{\sl Higher-dimensional complex geometry.\/}
Ast\'erisque \textbf{166} (1988).


\bibitem{GPS}
W. Decker,  G.-M. Greuel, G. Pfister and H. Sch\"one\-mann,
\textsc{Singular} {4-0-1} --- {A} computer algebra system for 
polynomial computations.
\texttt{http://www.singular.uni-kl.de} (2015).

\bibitem{Elk}  %
Ren\'ee Elkik,
{\sl Singularit\'es rationnelles et
d\'eformations.\/}  Invent. Math. {\bf47} (1978), 139--147.

\bibitem{AS} %
H. Grauert und R. Remmert {\sl Analytische Stellenalgebren.\/}
Unter Mitarbeit von O. Riemenschneider.
(Grundlehren der math. Wissenschaften {\bf176})
 Springer-Verlag, Berlin, 1971.

\bibitem{GLS}
G.-M. Greuel, C. Lossen and E. Shustin, 
{\sl Introduction to singularities and deformations.\/}
Springer Monographs in Mathematics. Springer, Berlin, 2007. 


\bibitem{Kar} %
Ulrich Karras,
{\sl Local cohomology along
exceptional sets.\/} Math. Ann. {\bf 275} (1986),  673--682.

\bibitem{Kl} %
Steven L. Kleiman,
{\sl The enumerative theory of singularities.\/}
In:  Real and complex singularities (Proc. Ninth Nordic Summer School/NAVF
 Sympos. Math., Oslo, 1976), 
 pp. 297--396. Sijthoff and Noordhoff, Alphen aan den Rijn,  1977. 

\bibitem{Kol}
J\'anos Koll\'ar, 
{\sl Flatness criteria \/.}
J. Algebra {\bf 175} (1995), 715--727. 

\bibitem{Lau} %
Henry B.   Laufer,  {\sl On rational singularities.\/}
 Amer. J. Math.  {\bf 94}  (1972), 597--608.

\bibitem{Lau2}  %
Henry B.  Laufer,  {\sl On $CP\sp{1}$ as an exceptional set.}
In:  Recent developments in several complex variables, 
 pp. 261--275, Ann. of Math. Stud., {\bf 100}, 
Princeton Univ. Press, Princeton, N.J.,  1981. 

\bibitem{LT}
L\^e D\~ung Tr\'ang and Meral Tosun,
{\sl Combinatorics of rational singularities.\/}
Comment. Math. Helv. {\bf79} (2004), 582--604. 		


\bibitem{Ma} 
Bernd Martin,
\texttt{deform.lib}. {A} \textsc{Singular} {4-0-2} library for computing
miniversal deformation of singularities and modules (2015).

\bibitem{Mum}
D. Mumford, 
{\sl Some footnotes to the work of C. P. Ramanujam\/}.
In:  C. P. Ramanujam--a tribute, pp. 247--262,
Tata Inst. Fund. Res. Studies in Math., 8, Springer, Berlin-New York, 1978. 


\bibitem{Nak} %
Noburu  Nakayama, {\sl On smooth exceptional curves in threefolds.\/}
 J. Fac. Sci. Univ. Tokyo Sect. IA Math.  {\bf 37}  (1990),  511--525.



\bibitem{Pin2} %
Henry C. Pinkham,  
{\sl Factorization of birational maps in dimension $3$.\/}
In: Singularities, Part 2 (Arcata, Calif., 1981), 
 343--371, 
Proc. Sympos. Pure Math., {\bf 40}, 
Amer. Math. Soc., Providence, RI,  1983. 

\bibitem{rpc} %
Miles Reid, {\sl Chapters on algebraic surfaces.\/}
In:  Complex algebraic geometry (Park City, UT, 1993), 
 3--159, IAS/Park City Math. Ser., {\bf 3}, 
Amer. Math. Soc., Providence, RI,  1997. 

\bibitem{Rie}  %
Oswald Riemenschneider, {\sl Familien komplexer
R\"aume mit streng pseudokonvexer spezieller Faser.\/}
 Comment. Math. Helv.  {\bf 51}  (1976),   547--565.

\bibitem{Stee}  %
J. H. M.  Steenbrink,  
{\sl  Mixed Hodge structures associated with isolated singularities.\/}
In:  Singularities, Part 2 (Arcata, Calif., 1981), 
 513--536, Proc. Sympos. Pure Math., {\bf 40}, Amer. Math. Soc., 
 Providence, RI,  1983. 

\bibitem{knis}  %
Jan Stevens, {\sl Deformations of singularities.\/}
Lecture Notes in Mathematics, {\bf 1811}. 
Springer-Verlag, Berlin,  2003. 

\bibitem{Ste-ne}  %
Jan Stevens, {\sl Non-embeddable 1-convex manifolds.\/} 
Ann. Inst. Fourier \text{bf64} (2014), 2205--2222.

\bibitem{Tei}  %
Bernard  Teissier, {\sl The hunting of invariants in 
the geometry of discriminants.\/}
In:  Real and complex singularities (Proc. Ninth Nordic Summer School/NAVF
 Sympos. Math., Oslo, 1976), 
 pp. 565--678. Sijthoff and Noordhoff, Alphen aan den Rijn,  1977. 


\bibitem{Wag} %
Philip  Wagreich, {\sl Elliptic singularities of
surfaces.\/}
 Amer. J. Math.  {\bf92}  (1970), 419--454.

\bibitem{Wah}  %
Jonathan M.  Wahl,  {\sl Equisingular deformations
of normal surface singularities. I.\/}
 Ann. of Math. (2)  {\bf104}  (1976),   325--356.



\end{thebibliography}
\end{document}